\setlist[enumerate]{leftmargin=.5in, rightmargin=.5in}
\renewcommand{\geq}{\geqslant}
\renewcommand{\leq}{\leqslant}
\renewcommand{\preceq}{\preccurlyeq}
\newtheorem{theorem}{Theorem}[section]
\newtheorem{proposition}[theorem]{Proposition}
\newtheorem{corollary}[theorem]{Corollary}
\newtheorem{lemma}[theorem]{Lemma}
\newtheorem{claim}{Claim}
\theoremstyle{remark}
\theoremstyle{definition}
\newtheorem{conjecture}{Conjecture}
\newtheorem*{conj1alpha}{Conjecture 1($\alpha$)}
\newtheorem*{conj2alpha}{Conjecture 2($\alpha$)}
\title{Quasi-kernels in split graphs}
\author{Hélène Langlois \and Frédéric Meunier \and Romeo Rizzi \and Stéphane Vialette \and Yacong Zhou}
\address[Hélène Langlois]{CERMICS, École des Ponts ParisTech, 77455 Marne-la-Vallée, France \and LIGM, Univ Gustave Eiffel, 77454 Marne-la-Vallée, France}
\email{helene.langlois@enpc.fr}
\address[Frédéric Meunier]{CERMICS, École des Ponts ParisTech, 77455 Marne-la-Vallée, France}
\email{frederic.meunier@enpc.fr}
\address[Romeo Rizzi]{Department of Computer Science, Università di Verona,  37129 Verona, Italy}
\email{romeo.rizzi@univr.it}
\address[Stéphane Vialette]{LIGM, Univ Gustave Eiffel, CNRS, 77454 Marne-la-Vallée, France}
\email{stephane.vialette@univ-eiffel.fr}
\address[Yacong Zhou]{Department of Computer Science, Royal Holloway University of London, London, United Kingdom}
\email{Yacong.Zhou.2021@live.rhul.ac.uk}
\begin{document}

\begin{abstract}
In a digraph, a quasi-kernel is a subset of vertices that is independent and such that the shortest path from every vertex to this subset is of length at most two. The ``small quasi-kernel conjecture,'' proposed by Erd\H{o}s and Sz\'ekely in 1976, postulates that every sink-free digraph has a quasi-kernel whose size is within a fraction of the total number of vertices. The conjecture is even more precise with a $1/2$ ratio, but even with larger ratio, this property is known to hold only for few classes of graphs.

The focus here is on small quasi-kernels in split graphs. This family of graphs has played a special role in the study of the conjecture since it was used to disprove a strengthening that postulated the existence of two disjoint quasi-kernels. The paper proves that every sink-free split digraph $D$ has a quasi-kernel of size at most $\frac{2}{3}|V(D)|$, and even of size at most two when the graph is an orientation of a complete split graph. It is also shown that computing a quasi-kernel of minimal size in a split digraph is \W[2]-hard.
\end{abstract}

\keywords{Digraph; Quasi-kernel; Split graph}

\maketitle

\section{Introduction}

Let $D$ be a digraph. A subset $K$ of its vertices is a \emph{kernel} if it is independent and every vertex not in $K$ has an outneighbor in $K$. It is a fundamental notion in the theory of directed graphs, with many applications, even beyond mathematics and computer science; see, e.g. \cite{igarashi2017coalition,walicki2012finding}. Yet, not all digraphs have kernels, and it is even \NP-complete to decide whether a digraph admits a kernel~\cite{chvatal_computational_1973}. In 1974, Chv\'atal and Lov\'asz~\cite{chvatal_every_1974-1} showed that a slight modification in the definition of a kernel, leading to the notion of quasi-kernels, ensures the systematic existence of those objects. A subset $Q$ of its vertices is a \emph{quasi-kernel} if it is independent and the shortest path from every vertex to $Q$ is of length at most two. (An equivalent definition of a kernel can be obtained by replacing the `` at most two'' by ``at most one.'')

Since all digraphs admit quasi-kernels, their existence is not a challenge, contrary to kernels, and it is rather their size that has attracted attention. This is manifest with what is probably the main conjecture about them, namely the ``small quasi-kernel conjecture,'' proposed by Erd\H{o}s and Sz\'ekely in 1976~\cite{conj-qk}:

\begin{conjecture}[``Small quasi-kernel conjecture'']\label{conj:sqkc}
Every sink-free digraph $D$ admits a quasi-kernel of size at most $\frac 1 2 |V(D)|$.
\end{conjecture}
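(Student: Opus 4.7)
The plan would be to proceed by induction on $|V(D)|$. The base case is trivial, and for the inductive step I would look for a vertex or small set $S \subseteq V(D)$ whose removal yields a digraph $D - S$ that is still sink-free (or can be repaired into one by a local modification), apply the induction hypothesis to obtain a quasi-kernel $Q'$ with $|Q'| \leq |V(D-S)|/2$, and then extend $Q'$ to a quasi-kernel of $D$ by adding on average at most $|S|/2$ new vertices. A natural first move is to handle sources: a source $v$ must either belong to $Q$ or be dominated in two steps by some $u \in Q$, so one can branch on these two options and try to charge the cost of $v$ to a distinct partner vertex. I would first reduce to the case where $D$ is source-free, since sources behave rigidly and are the most restrictive obstruction to inclusion-style arguments.

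A complementary strategy is to refine the classical Chv\'atal--Lov\'asz greedy construction, which iteratively takes maximal independent sets among the still-undominated vertices and terminates with a quasi-kernel but yields only the trivial bound $|Q| \leq |V(D)|$. The improvement I would attempt is a charging argument, assigning to each $q \in Q$ at least one private ``witness'' from $V(D) \setminus Q$ that cannot be charged to any other element of $Q$, potentially via a fractional or weighted version in which every vertex outside $Q$ distributes unit mass among its two-step predecessors in $Q$. The hope is that a careful tie-breaking rule during the greedy produces a quasi-kernel in which this mass averages at least $1$ per vertex of $Q$.

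The main obstacle, and the reason the conjecture has stood open since 1976, is that no such local-to-global charging scheme has been found to survive the worst-case instances. Sink-freeness is too weak to be preserved under the natural deletion operations, and the extremal families known (e.g.\ carefully oriented bipartite-like structures) show that the $1/2$ bound would be essentially tight, leaving no slack for the inductive loss typically incurred when lifting a quasi-kernel from $D - S$ back to $D$. For this reason I would temper the ambition and aim first at weaker ratios of the form $(1-\epsilon)|V(D)|$ via probabilistic rounding, or at restricted classes --- following the philosophy of the present paper, which establishes the $2/3$ bound for split digraphs by exploiting their rigid clique-plus-stable-set decomposition --- before returning to the general sink-free case.
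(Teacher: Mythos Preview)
This statement is Conjecture~\ref{conj:sqkc} in the paper --- the ``small quasi-kernel conjecture'' of Erd\H{o}s and Sz\'ekely --- and the paper does \emph{not} prove it. It is explicitly presented as ``still wide open,'' and the paper's contributions (Theorems~\ref{thm:split} and~\ref{thm:split-comp}) concern only the restricted class of split digraphs, with a weaker $2/3$ ratio. There is therefore no proof in the paper to compare your proposal against.

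You have, in effect, correctly diagnosed the situation: your write-up is not a proof but a survey of plausible attack lines (induction with local repair, refined Chv\'atal--Lov\'asz with charging, probabilistic rounding toward $(1-\epsilon)$ bounds) together with an honest account of why each runs into trouble. That assessment is accurate and aligns with the state of the art as the paper describes it. But to be clear, nothing in your proposal constitutes a proof or even a sketch that could be completed with routine work; the ``main obstacle'' paragraph is the operative one, and it is where every known approach currently stalls. If the assignment was to prove the statement, the only truthful answer is that no proof is known.
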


This conjecture is still wide open, and has been proved only for a few special cases: semi-complete multi-partite digraphs and quasi-transitive digraphs~\cite{heard2008disjoint}; digraphs admitting a kernel~\cite{van2021kernels}; digraphs that can be partitioned into two kernel-perfect digraphs, like $4$-colorable graphs~\cite{kostochka2022towards}; a generalization of anti-claw free digraphs~\cite{ai2023results}; digraphs containing a kernel in the second outneighborhood of a quasi-kernel and orientations of unicyclic graphs~\cite{erdHos2023small}. As far as we know, versions of the conjecture with ratios possibly larger than $1/2$ have not been proved outside these cases.

The question of computing a quasi-kernel of minimal size has also been studied~\cite{langlois2022algorithmic}. The problem is \W[2]-hard, even for acyclic orientations of bipartite graphs, and it is not approximable within a constant ratio unless $\P=\NP$. It is only known to be polynomial-time solvable for orientations of graphs with bounded treewidth~\cite[Section 7.5]{langlois}.

In the present paper, we focus on split digraphs. A \emph{split} graph is a graph whose vertices can be partitioned into a clique and an independent set. We extend this notion to digraphs by requiring that the underlying undirected graph is a split graph. ``Split digraphs'' and ``biorientations of split graphs'' are thus equivalent expressions, the latter being preferably used when some emphasize is put on the structure of the underlying undirected graph.

One of the motivations is a construction by Gutin et al.~\cite{gutin_number_2004} using a split digraph and refuting a strengthening of the small quasi-kernel conjecture they formulated a few years before~\cite{gutin_number_2001}.
 
Our first contribution is the following theorem.

\begin{theorem}\label{thm:split}
Every sink-free split digraph $D$ admits a quasi-kernel of size at most $\frac 2 3 |V(D)|$.
\end{theorem}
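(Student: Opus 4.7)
Let $D = (V, A)$ be a sink-free split digraph with $V = C \cup I$, where $C$ is a clique, $I$ is an independent set, and $n = |V|$. The key structural observation is that, since $C$ is a clique in the underlying graph, every independent set of $D$ (hence every quasi-kernel $Q$) contains at most one vertex of $C$. Thus every quasi-kernel $Q$ takes the form $Q = \{c_*\} \cup J$ (with $c_* \in C$ possibly absent) for some $J \subseteq I \setminus N(c_*)$, and the task is to construct such a $Q$ with $|J| + 1 \leq \frac{2}{3}n$.

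The first step is to choose $c_* \in C$ so as to dominate most of $V$ via short paths. Since the induced biorientation on $C$ is a biorientation of a complete graph, extracting a tournament from it and applying Landau's king theorem gives a vertex $c_* \in C$ such that every vertex of $C$ reaches $c_*$ via a directed path of length at most two (a \emph{reverse king} of $C$). With this choice, the set of uncovered vertices $U := V \setminus (\{c_*\} \cup N^-(c_*) \cup N^{--}(c_*))$ lies entirely in $I$. Split $U = U_{\mathrm{free}} \cup U_{\mathrm{adj}}$ with $U_{\mathrm{free}} = U \setminus N(c_*)$ and $U_{\mathrm{adj}} = U \cap N(c_*)$. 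The vertices of $U_{\mathrm{free}}$ can be added to $J$ directly (they are independent from $c_*$ and cover themselves); each $i \in U_{\mathrm{adj}}$ satisfies $c_* \to i$ (since it is adjacent to $c_*$ but does not reach $c_*$) and must be covered by a length-two path $i \to c \to j$ for some $j \in J$, possibly requiring further inclusions in $J$.

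The counting step must establish $|J| + 1 \leq \frac{2}{3}n$. In the regime $|I| \leq \frac{2}{3}n - 1$, the trivial bound $|J| \leq |I|$ already suffices, provided the above construction yields a valid quasi-kernel. The harder regime is $|C| \leq n/3$: here sink-freeness forces a large total number of $I$-to-$C$ arcs (at least $|I|$), and pigeonhole yields a $c_* \in C$ with $|N^-(c_*) \cap I| \geq |I|/|C| \geq 2$, so many $I$-vertices are absorbed directly and $U$ shrinks substantially. A double-counting argument, charging each $j \in J$ against distinct elements of $V \setminus Q$ that it uniquely covers, should then close the bound. The main obstacle will be arranging all these criteria on $c_*$ simultaneously (reverse king in $C$ versus large in-neighborhood from $I$), and controlling $U_{\mathrm{adj}}$: one natural workaround is an averaging argument over the candidates $c_* \in C$, showing that some choice yields a quasi-kernel of size at most $\frac{2}{3}n$; an alternative is an inductive strategy that, after committing to $c_*$, recurses on a suitably smaller sink-free split subdigraph to cover $U_{\mathrm{adj}}$.
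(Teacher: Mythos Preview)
Your plan has a genuine gap, and it is exactly the one you flag at the end without resolving. Picking a $2$-serf $c_*$ of $D[C]$ gives you nothing for vertices $i\in I$ with $c_*\to i$: such an $i$ cannot enter $J$ (independence), and covering it requires a path $i\to c\to j$ with $j\in I\setminus N(c_*)$, which need not exist. Concretely, take $C=\{c_1,c_2,c_3\}$ with the $3$-cycle $c_1\to c_2\to c_3\to c_1$ and $I=\{i_1\}$ with $c_1\to i_1\to c_2$. Every vertex of $C$ is a $2$-serf, yet if you commit to $c_*=c_1$ there is \emph{no} quasi-kernel of the form $\{c_1\}\cup J$ at all, because $I\setminus N(c_1)=\varnothing$ while $i_1$ is at distance $3$ from $c_1$. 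So your ``easy regime'' $|I|\leq \tfrac{2}{3}n-1$ is not dispatched by the trivial bound $|J|\leq |I|$: the construction may simply fail to produce a quasi-kernel. Your averaging fix runs into the same wall, since for some $c_*$ the quantity you would average is undefined (or you must assign it a value so large that the average no longer says anything). The inductive fix might be salvageable, but you have not said what subdigraph you recurse on, why it is sink-free, or why its quasi-kernel extends to one of $D$; this is where all the work lies.

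The paper's proof proceeds quite differently and avoids committing to a single clique vertex up front. It takes a maximal matching $M$ from $K(D)$ to $I(D)$ and uses the matched set $I_M\subseteq I$ as the backbone. Setting $A=I_M\cup N^-(I_M)\cup(N^{--}(I_M)\cap I)$ and $B=V(D)\setminus A$, the maximality of $M$ forces $D[B]$ to be a one-way split digraph, so the already-proved $\tfrac{1}{2}$-bound for that class yields a small quasi-kernel $Q_1$ of $D[B]$; gluing gives a quasi-kernel $Q$ with $|Q|\leq \tfrac{1}{2}|V(D)|+\tfrac{1}{2}|N^{--}(I_M)\cap I|$. A second, more direct construction gives a quasi-kernel $Q'$ with $|Q'|\leq |V(D)|-|N^{--}(I_M)\cap I|$. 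The bound $\min\{|Q|,|Q'|\}\leq \tfrac{2}{3}|Q|+\tfrac{1}{3}|Q'|\leq \tfrac{2}{3}|V(D)|$ then finishes. The crucial idea you are missing is this two-quasi-kernel trade-off against a single parameter ($|N^{--}(I_M)\cap I|$), rather than a case split on $|I|$ versus $|C|$ and a single anchor vertex $c_*$.
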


As we will see in Section~\ref{sec:small}, where this theorem is proved, there are families of split digraphs for which the smallest quasi-kernel is asymptotically half of the vertices. So, even for this particular family, there is no hope for a better ratio than $1/2$.

We also prove (Section~\ref{sec:complete}) that the size is much smaller when the underlying undirected split graph is \emph{complete}, namely when every vertex in the independent-part is adjacent to every vertex in the clique-part.

\begin{theorem}\label{thm:split-comp}
Let $D$ be a biorientation of a complete split graph. If $D$ has a sink, then there is a unique minimum-size quasi-kernel, which is formed by all sinks. If $D$ is sink-free, then the minimal size of a quasi-kernel is at most two. 
\end{theorem}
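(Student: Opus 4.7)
The plan has two parts, matching the two statements. For the sink case, I would first observe that two sinks cannot be adjacent (their edge would require an out-arc from one of them), so the sink set $S$ is automatically independent. In a complete split graph this confines $S$ either to a single vertex of $C$ with $S \cap I = \emptyset$ (since $C$ is a clique, and a sink in $C$ would be adjacent to every sink in $I$), or entirely to $I$. I would then verify that $S$ is a quasi-kernel: every $c \in C$ sends a direct arc into each sink $s$ (distance $1$), and every non-sink $u \in I \setminus S$ has some out-neighbour $c \in C$ by sink-freeness and independence of $I$, yielding a $2$-path $u \to c \to s$. Finally, because a sink has no out-arc it can only lie at distance $0$ from any quasi-kernel, so every quasi-kernel must contain $S$; this makes $S$ the unique minimum.

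For the sink-free case I would pick $v^* \in C$ maximizing $|N^-_C(v^*)|$; the standard max-indegree argument in the super-tournament $D[C]$ produces an \emph{in-king}, meaning every $c \in C$ reaches $v^*$ along a $C$-path of length at most two. If every $u \in I$ also reaches $v^*$ in $D$ within two steps, then $\{v^*\}$ is the desired quasi-kernel. Otherwise let $A \subseteq I$ be the non-empty ``bad'' set; each $u \in A$ must satisfy $v^* \to u$ strictly and $N^+(u) \subseteq B := \{c \in C \setminus \{v^*\} : v^* \to c,\ c \not\to v^*\}$ (else a length-$1$ or length-$2$ path to $v^*$ would appear). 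Picking any $u^* \in A$, I would observe that $\{v^*\} \cup N^-_C(v^*) \subseteq N^-_C(u^*)$ (forced by $u^* \in A$) and combine this with the in-king property of $v^*$ to show that $\{u^*\}$ already dominates $V \setminus A$: every $c \in \{v^*\} \cup N^-_C(v^*)$ sends a direct arc into $u^*$; every $c \in B$ reaches $u^*$ via $c \to c' \to u^*$ with $c' \in N^-_C(v^*)$; and every $u \in I \setminus A$ reaches $u^*$ through $v^*$ or through some $c \in N^+(u) \cap N^-_C(v^*) \subseteq N^-_C(u^*)$. Consequently $A$ is itself a quasi-kernel, which settles the case $|A| \le 2$ immediately.

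The hard case is $|A| \ge 3$, where I must produce a quasi-kernel of size exactly two. Since any independent set of size $\ge 2$ in a complete split graph lies inside $I$, the candidate must be a pair $\{u^*, u^{**}\} \subseteq I$. Thanks to the coverage above, $\{u^*, u^{**}\}$ is a quasi-kernel as soon as no $u \in A \setminus \{u^*, u^{**}\}$ satisfies $N^+(u) \subseteq \mathrm{Out}(u^*) \cap \mathrm{Out}(u^{**})$, where $\mathrm{Out}(u) := \{c \in C : u \to c,\ c \not\to u\}$. A clean sufficient condition is $\mathrm{Out}(u^*) \cap \mathrm{Out}(u^{**}) = \emptyset$, equivalently $N^-_C(u^*) \cup N^-_C(u^{**}) = C$. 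The main obstacle will be securing such a pair by a case analysis on the family $\{\mathrm{Out}(u)\}_{u \in A}$ of subsets of $B$: if two members are disjoint, use them directly; if the family is pairwise intersecting, I plan to leverage the super-tournament structure on $B$ (inside which all these sets live) to extract either a common element $c_0 \in \bigcap_{u \in A} \mathrm{Out}(u)$ for which $\{c_0\}$ itself is a quasi-kernel, or an inductive reduction to the smaller complete split sub-digraph on $B \cup A$. This final structural argument is the crux of the proof.
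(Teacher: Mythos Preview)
Your treatment of the sink case is correct and essentially matches the paper's one-line observation.

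For the sink-free case, however, your plan has a genuine gap in the step you yourself flag as the crux. Restricting the search for the pair $\{u^*,u^{**}\}$ to $A$ and analysing the family $\{\mathrm{Out}(u)\}_{u\in A}$ does not close: pairwise intersecting families of subsets of $B$ need not have a common point; even when a common point $c_0$ exists there is no reason for $\{c_0\}$ to be a $2$-serf of $D$; and the ``inductive reduction'' to $D[B\cup A]$ does not lift back, because a quasi-kernel of the subdigraph need not absorb the vertices of $I\setminus A$. A concrete obstruction: take $B=\{b\}$ and $A=\{a_1,a_2,a_3\}$ with $a_i\to b$ and $b\not\to a_i$ for all $i$. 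Then every $\mathrm{Out}(a_i)=\{b\}$, so no two are disjoint; $\{b\}$ is the only candidate common point but is not a quasi-kernel of $D$ in general (a vertex $u\in I\setminus A$ whose unique out-neighbour lies in $N^-_C(v^*)\setminus N^-(b)$ has no $2$-path to $b$); and $D[B\cup A]$ has $b$ as a sink, so its minimum quasi-kernel $\{b\}$ again fails to extend. In this example the size-two quasi-kernel is $\{a_1,t\}$ for some $t\in I\setminus A$ with $b\to t$, so the second vertex must be sought \emph{outside} $A$.

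The paper sidesteps the whole case analysis by choosing the maximiser differently: let $x$ maximise $|N^-(x)\cap K(D)|$ over \emph{all} vertices of $D$, not just over $C$. Under the assumption that $D$ has no quasi-kernel of size one, this forces $x\in I(D)$, and the set $L$ of vertices sharing $x$'s in-neighbourhood plays the role of your $A$. The partner $t$ is then obtained in one stroke: if every $v\in I(D)$ satisfied $N^+(x)\subseteq N^+(v)$, a Chv\'atal--Lov\'asz argument would produce a $2$-serf of $D$ inside $K(D)$, contradicting the no-size-one hypothesis; hence some $t\in I(D)$ has $N^+(x)\cap N^-(t)\neq\varnothing$, and $\{x,t\}$ is the desired quasi-kernel. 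The point is that the second vertex is located by a global dichotomy rather than by combinatorics internal to $A$.
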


This theorem implies in particular that computing a quasi-kernel of minimal size can be done in polynomial time in biorientations of complete split graphs. However, the complexity of computing a quasi-kernel of minimal size does not really improve when we restrict the class of digraphs to split digraphs. Let {\sc Quasi-Kernel} be the computational problem whose input is a digraph $D$ and an integer $q$ and that consists in deciding whether $D$ has a quasi-kernel of size at most $q$.

\begin{proposition} \label{W[2]}
  {\sc Quasi-Kernel} is $\W[2]$-complete when the parameter is the size of the sought 
  quasi-kernel, even for split digraphs. 
\end{proposition}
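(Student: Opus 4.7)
The plan is to show both containment in $\W[2]$ and $\W[2]$-hardness of {\sc Quasi-Kernel} restricted to split digraphs. Containment is immediate because the problem is already known to lie in $\W[2]$ for general digraphs (see~\cite{langlois2022algorithmic}), and split digraphs are a subclass, so the substantive work lies in the hardness. I would reduce from $k$-{\sc Dominating Set}, a canonical $\W[2]$-complete problem. Given $G=(V,E)$, I construct a split digraph $D$ whose clique part is $C=\{c_v : v\in V\}$ and whose independent part is $I=\{u_v : v\in V\}\cup Z$, where $Z$ is a set of ``blocker'' vertices of constant size added to handle the difficulty explained below. In the underlying undirected graph, $u_v$ is adjacent to $c_w$ exactly when $w\in N_G[v]$. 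Arcs between $I$ and $C$ are assigned asymmetrically: $u_v\to c_w$ for each $w\in N_G[v]$, but $c_w\to u_v$ only when $w=v$; inside $C$, I would orient the edges as a regular tournament (avoiding bi-arcs). The key consequence of this asymmetry is that any directed length-two path from $u_v$ to $u_{v'}$ must factor as $u_v\to c_{v'}\to u_{v'}$ and exists iff $v'\in N_G[v]$; hence a subset of the form $Q=\{u_v : v\in S\}\subseteq I$ satisfies the quasi-kernel conditions concerning the $u$-vertices precisely when $S$ is a dominating set of $G$.

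The main obstacle is to rule out small quasi-kernels that exploit a clique vertex. Although $|Q\cap C|\leq 1$ always holds in a split digraph, without additional care a singleton $\{c_w\}$ would form a quasi-kernel of size one (the clique's many short paths allow every vertex to reach $c_w$ in at most two steps), which would collapse the reduction. The blockers in $Z$ are introduced precisely to defeat this: each $z\in Z$ has a single out-arc into $C$, pointing to a target $c_{f(z)}$, and the $|Z|$ targets are chosen so that for every $c_w\in C$ some $z\in Z$ is unable to reach $c_w$ in at most two steps. This is possible because in a regular tournament every $c_w$ has out-neighbors in $C$, and we pick $f(z)$ among those out-neighbors so that $c_w$ lies outside $\{c_{f(z)}\}\cup N^+_C(c_{f(z)})$. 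Setting $q=k+|Z|$, the forward direction of the reduction is direct: a dominating set $S$ of $G$ of size $k$ yields the quasi-kernel $\{u_v : v\in S\}\cup\{z\in Z : f(z)\notin S\}$ of size at most $k+|Z|$.

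The reverse direction, i.e.\ extracting from an arbitrary quasi-kernel of $D$ of size at most $q$ a dominating set of $G$ of size at most $k$, is the technically delicate step, and I expect it to be the hardest part of the proof. It will proceed by a case analysis on whether $Q\cap C$ is empty or consists of a single clique vertex $c_w$; in the latter case, the blockers must be used to show that the induced ``gain'' of including $c_w$ never outweighs the coverage provided by a genuine dominating set of $G$. Once this dichotomy is settled, the reduction is polynomial-time and changes the parameter only by the additive constant $|Z|$, proving $\W[2]$-hardness on split digraphs and hence $\W[2]$-completeness.
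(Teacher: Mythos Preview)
Your forward direction does not go through. You claim that if $S$ dominates $G$ then $Q=\{u_v:v\in S\}\cup\{z\in Z:f(z)\notin S\}$ is a quasi-kernel of $D$, but the clique vertices need not be covered. Take any $c_v$ with $v\notin S$. Its sole out-neighbour in $I$ is $u_v\notin Q$, and the blockers $z$ have no in-arcs at all, so the only way $c_v$ can reach $Q$ in two steps is via $c_v\to c_{v'}\to u_{v'}$ with $v'\in S$; this forces $\{v':c_{v'}\in N^+_C(c_v)\}\cap S\neq\varnothing$, a tournament-domination condition that has nothing to do with domination in $G$. Concretely, let $G$ be a star with centre $v_0$: then $S=\{v_0\}$ dominates $G$, yet in any regular tournament on $C$ roughly half the vertices $c_i$ have $c_0$ as an out-neighbour rather than an in-neighbour, and those $c_i$ cannot reach $u_0$ (or any $z$) in two steps. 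Your sentence ``satisfies the quasi-kernel conditions concerning the $u$-vertices'' is accurate, but the quasi-kernel conditions concerning the $c$-vertices are simply never verified, and they fail.

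The paper's reduction is engineered precisely to dissolve this obstacle. It introduces a distinguished independent vertex $s$ with out-arcs to every vertex in one half $K^1$ of the clique and in-arcs from every vertex in the other half $K^2$, together with all arcs from $K^1$ to $K^2$; consequently every clique vertex reaches $s$ in at most two steps, and the intended quasi-kernel is $\{s\}\cup\{s^1_v:v\in L\}$ for $L$ a directed dominating set. To force $s$ into every small quasi-kernel, the paper hangs $b=2q+3$ pendant sources off $K^2$ and orients $K^2$ so that each of its vertices has at most $\lceil b/2\rceil\leq q+2$ in-neighbours inside $K^2$; if $s\notin Q$ a counting argument shows too many pendants are uncovered. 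Your blockers $Z$ play the same role as these pendants, but they must be coupled with a device like $s$ that absorbs the whole clique; without it the reduction is incorrect in the easy direction, before one even reaches the ``technically delicate'' reverse direction you flag as unfinished.
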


The proof is given in Section~\ref{sec:complexity}, where a complementary \FPT{} result is also stated and proved.

\subsection*{Notation and terminology}

Given a split digraph $D$, the set of vertices in the clique part is denoted by $K(D)$ and those in the independent part is denoted by $I(D)$. A {\em one-way} split digraph is a split digraph such that every vertex in the independent part is a source.

We use the same terminology as in the reference book by Bang-Jensen and Gutin~\cite{Bang-JensenGutin_2009}. A {\em biorientation} of an undirected graph consists in replacing each edge $uv$ either by a single arc $(u,v)$ or $(v,u)$, or by the two arcs $(u,v)$ and $(v,u)$. An {\em orientation} of an undirected graph consists in replacing each edge $uv$ by a single arc $(u,v)$ or $(v,u)$. A {\em tournament} is an orientation of a clique. A {\em semicomplete} digraph is a biorientation of a clique.

A {\em 2-serf} is a quasi-kernel of size one. For any subset $S\subseteq V(D)$, we denote by $N^-(S)$ (resp.\ $N^+(S)$) the set of vertices in $V(D)\setminus S$ that have an outneighbor (resp.\ inneighbor) in $S$. We use $N^{--}(S)$ to denote the set of vertices in $V(D)\setminus (S\cup N^-(S))$ that can reach $S$ by exactly two arcs. The closed inneighborhood (resp.\ closed outneighborhood) of $v$, i.e., $N^-(v)\cup \{v\}$ (resp.\ $N^+(v)\cup \{v\}$), is denoted by $N^-[v]$ (resp.\ $N^+[v]$). For any positive integer $k$, let $[k]=\{1,2,\dots,k\}$.

\section{Small quasi-kernels}\label{sec:small}

\subsection{Optimality of the $1/2$-ratio}

We describe two infinite families of split digraphs whose smallest quasi-kernel is asymptotically half of the vertices. The first family is formed by one-way split digraphs. The second family does not contain one-way split digraphs and shows that, if the ratio $1/2$ is correct, one-way split digraphs are not the reason of the tightness of the ratio for split digraphs.

Consider the one-way split digraph $D_n$ defined by
\[
\begin{array}{l}
K(D_n) \coloneqq \{k_0,\ldots, k_{2n}\}, \quad I(D_n) \coloneqq \bigl\{s_{ij}\colon 0 \leq i \leq 2n, 1 \leq j \leq n\bigl\}, \quad \text{and} \\[1ex]
A(D_n) \coloneqq \big\{(k_i,k_{i+j})\colon 0\leq i\leq 2n, 1\leq j\leq n\big\} \cup \big\{(s_{ij},k_i) \colon 0\leq i\leq 2n, 1\leq j\leq n\big\}\, ,
\end{array}
\]
where the sum $i+j$ is understand modulo $2n+1$ (i.e., $i+j = i + j -2n-1$ if $i+j>2n$).

\begin{proposition}
Denote by $Q_n$ a smallest quasi-kernel of $D_n$. Then 
\[
\lim_{n\rightarrow+\infty}\frac{|Q_n|}{|V(D_n)|} = \frac 1 2 \, .
\]
\end{proposition}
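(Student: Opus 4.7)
The plan is to compute $|Q_n|$ exactly as $1+n^2$ and then let $n \to \infty$ in $|Q_n|/|V(D_n)| = (1+n^2)/((2n+1)(n+1))$. Observe first that the subdigraph induced by $K(D_n)$ is the rotational tournament on $2n+1$ vertices in which $k_i$ beats $k_{i+1},\dots,k_{i+n}$ (indices modulo $2n+1$); a short case analysis on the residue of the index difference shows that in this tournament every vertex is within distance two of every other, a fact I will invoke twice.

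The first key step is to show that any quasi-kernel $Q$ of $D_n$ intersects $K(D_n)$ in exactly one vertex. Since $Q$ is independent and $K(D_n)$ induces a tournament, $|Q \cap K(D_n)| \leq 1$; and because every $s_{ij}$ is a source, every arc leaving $K(D_n)$ returns to $K(D_n)$, so no clique vertex could reach a quasi-kernel contained in $I(D_n)$. Write $Q \cap K(D_n) = \{k_a\}$.

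Next I would identify the $s_{ij}$ forced to lie in $Q$. The unique outneighbor of $s_{ij}$ is $k_i$, and $N^+(k_i) \subseteq K(D_n)$, so a path of length at most two from $s_{ij}$ to $Q$ exists if and only if $s_{ij} \in Q$, or $i = a$, or $k_a \in N^+(k_i)$; the last condition rewrites as $a - i \in \{1,\dots,n\}$ modulo $2n+1$. Therefore, for each of the $n$ indices $i$ with $i - a \in \{1,\dots,n\}$ modulo $2n+1$, all $n$ vertices $s_{ij}$ must belong to $Q$, giving $|Q| \geq 1 + n^2$. A matching upper bound is witnessed by the explicit set
\[
Q \coloneqq \{k_0\} \cup \bigl\{ s_{ij} : i \in [n],\ j \in [n] \bigr\},
\]
which is independent and satisfies the distance condition: each $s_{ij} \notin Q$ reaches $k_0$ through $k_i$ in at most two arcs, and each $k_i \neq k_0$ reaches $k_0$ within two arcs by the observation about the rotational tournament. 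Hence $|Q_n| = 1 + n^2$ and $|Q_n|/|V(D_n)| \to 1/2$.

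There is no substantial obstacle in this argument; the only mildly technical step is the short verification of 2-step reachability inside the rotational tournament, and the conceptual crux is the reduction to $|Q \cap K(D_n)| = 1$, which, together with the fact that $I(D_n)$-vertices are sources with a single outneighbor, forces the structure of $Q$ up to the choice of $a$.
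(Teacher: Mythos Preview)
Your proof is correct and follows essentially the same approach as the paper: reduce to $|Q\cap K(D_n)|=1$, use the cyclic symmetry to fix the clique vertex as $k_0$, identify the $n^2$ sources $s_{ij}$ with $i\in[n]$ that are forced into $Q$, and verify that the resulting set of size $n^2+1$ is indeed a quasi-kernel. The paper's write-up is terser (it does not spell out the rotational-tournament reachability or separate the lower and upper bounds as explicitly as you do), but the argument is the same.
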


\begin{proof}
    Since $D_n$ is one-way, $Q_n$ intersects $K(D_n)$ in a single vertex. Assume that this vertex is $k_0$. This is without loss of generality because of the symmetry of $D_n$. For every $i\in\{1,\ldots,n\}$, the shortest $k_i$-$k_0$ path is of length two. On the other hand, there is an arc $(k_i,k_0)$ for every $i\in\{n+1,\ldots,2n\}$. Therefore
    $Q_n = \{k_0\} \cup \big\{s_{ij}\colon i,j \in \{1,\ldots,n\}\big\}$ and $|Q_n| = n^2 +1$. The convergence result is then a consequence of $|V(D_n)| =2n^2 +3n+1$.
\end{proof}

Consider now the split digraph $D'_n$ defined by 
\[
\begin{array}{l}
K(D'_n) \coloneqq K(D_n), \quad I(D'_n) \coloneqq I(D_n), \quad \text{and} \\[1ex]
A(D'_n) \coloneqq A(D_n)\cup \{(k_0,s_{ij}): 1\leq i\leq 2n, 1\leq j\leq n\}.
\end{array}
\]
It is not a one-way split digraph but a strongly connected split digraph.

\begin{proposition}
Denote by $Q'_n$ a smallest quasi-kernel of $D'_n$. Then 
\[
\lim_{n\rightarrow+\infty}\frac{|Q'_n|}{|V(D'_n)|} = \frac 1 2 \, .
\]
\end{proposition}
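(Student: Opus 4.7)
The plan is to show that $|Q'_n| = n^2 - n + 1$, so that together with $|V(D'_n)| = (2n+1)(n+1) = 2n^2+3n+1$ the ratio tends to $1/2$. The argument mirrors the proof for $D_n$ but has to account for the asymmetric role of the new arcs $(k_0,s_{ij})$.

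First I would analyze $Q'_n \cap K(D'_n)$. Since $K(D'_n)$ is a clique, this intersection has size at most one, and I claim it is exactly $\{k_i\}$ for some $i \neq 0$. Indeed, if $Q'_n \subseteq I(D'_n)$, then $k_1$ admits no length-$\leq 2$ walk to $Q'_n$: its $1$- and $2$-step out-neighborhoods all lie in $K(D'_n)$, because the only arcs leaving $K(D'_n)$ emanate from $k_0$ and $k_0 \notin N^+(k_1)$. If instead $k_0 \in Q'_n$, then every vertex of $I(D'_n)$ is adjacent to $k_0$ (through either $(s_{0j},k_0)$ or $(k_0,s_{ij})$), forcing $Q'_n = \{k_0\}$; but $s_{1,1}$ has no length-$\leq 2$ walk to $\{k_0\}$, a contradiction.

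Fix now $Q'_n \cap K(D'_n) = \{k_i\}$ with $i \in \{1, \ldots, 2n\}$. For any $i' \geq 1$ with $i' \neq i$, the vertex $s_{i'j'}$ has $k_{i'}$ as its unique out-neighbor and $N^+(k_{i'}) \subseteq K(D'_n)$; so $s_{i'j'}$ reaches $Q'_n$ in at most two arcs if and only if $k_i \in N^+[k_{i'}]$, equivalently $i - i' \in \{0,1,\ldots,n\} \pmod{2n+1}$. A direct count shows that for every $i$ there are at least $n-1$ indices $i' \in \{1, \ldots, 2n\} \setminus \{i\}$ violating this condition, each contributing $n$ forced vertices of $Q'_n$, hence $|Q'_n| \geq 1 + n(n-1) = n^2 - n + 1$. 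For the matching upper bound I exhibit $Q^{\ast} \coloneqq \{k_{n+1}\} \cup \{s_{i'j'} : n+2 \leq i' \leq 2n,\ 1 \leq j' \leq n\}$, of size $n^2 - n + 1$. Independence is immediate; to verify the quasi-kernel property one uses that the rotational tournament on $K(D'_n)$ has diameter two (every $k_j$ reaches $k_{n+1}$ within two arcs), each $s_{0j'}$ takes the length-two path $s_{0j'} \to k_0 \to s_{n+2,1}$ through a new arc, and every remaining $s_{i'j'} \notin Q^{\ast}$ either has $i' = n+1$ (direct arc to $k_{n+1}$) or $1 \leq i' \leq n$ (so that $k_{i'} \to k_{n+1}$ is a clique arc).

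Combining the two bounds yields $|Q'_n| = n^2 - n + 1$, and therefore $|Q'_n|/|V(D'_n)|$ tends to $1/2$. The step I expect to be the most delicate is the lower bound: one must resist forcing the vertices $s_{0,j'}$ into $Q'_n$, because the new arcs from $k_0$ let them piggy-back on any $s$-vertex already placed in $Q'_n$. Correctly exploiting this asymmetry between $s_{0,j'}$ and the other $s$-vertices is exactly what separates the answer $n^2 - n + 1$ for $D'_n$ from the analogous $n^2 + 1$ for $D_n$.
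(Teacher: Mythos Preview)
Your proof is correct and follows essentially the same line as the paper's: both first establish that $Q'_n\cap K(D'_n)$ must be a singleton $\{k_\ell\}$ with $\ell\neq 0$, and then determine which $s$-vertices are forced into $Q'_n$ once $\ell$ is fixed. The paper is content with the sandwich $n(n-1)+1\leq |Q'_n|\leq n^2+1$, whereas you go slightly further and pin down the exact value $n^2-n+1$ (valid for $n\geq 2$) by exhibiting an explicit optimal quasi-kernel at $\ell=n+1$; either version suffices for the limit.
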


\begin{proof}
Since every path from $k_1$ to $I(D'_n)$ is of length at least $3$, the quasi-kernel $Q'_n$ intersects necessarily $K(D'_n)$ in a single vertex.
This vertex cannot be $k_0$ since the shortest path from $s_{11}$ to $k_0$ is of length $3$ and of length $4$ to $I(D'_n)$. Then, $Q'_n\cap K(D'_n)=\{k_{\ell}\}$ with $\ell\in \{1,\ldots,2n\}$, and $Q'_n=\{k_{\ell}\}\cup \{s_{(\ell+h)j}\colon 1\leq h\leq n, 1\leq j \leq n\}\setminus \{s_{0j}\colon 1\leq j \leq n\}.$
Therefore, $n^2+1 \geq |Q'_n|\geq n(n-1)+1$ and the convergence result is a consequence of $|V(D_n)| =2n^2 +3n+1$.
\end{proof}

\subsection{A $2/3$-bound for sink-free split digraphs}

The fact that one-way split digraphs satisfy the small quasi-kernel conjecture has been noticed by several persons~\cite[Section 6.1]{langlois}. A tight bound for the minimum size of a quasi-kernel in one-way split digraphs has recently been given by Ai et al.~\cite{ai2023results}. We propose here a shorter proof.

It is well known that every semicomplete digraph has a $2$-serf. The following useful lemma is easy to prove.
	\begin{lemma}\label{lem:1}
		Let $T$ be a semicomplete digraph. If $v\in V(T)$ is not a $2$-serf, then there is a $2$-serf $u$ of $T$ such that $N^-[v]\subseteq N^-(u)$.
	\end{lemma}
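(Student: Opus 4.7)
The plan is to locate the 2-serf $u$ inside the subset of vertices that are far from $v$. Set $A = V(T) \setminus (\{v\} \cup N^-(v) \cup N^{--}(v))$, i.e., the vertices that do not reach $v$ in at most two arcs. Because $v$ is not a 2-serf, this set is non-empty, and since semicomplete digraphs are closed under taking induced subdigraphs, $T[A]$ is itself semicomplete. I would then pick $u$ to be a 2-serf of $T[A]$, which exists by the classical fact that any vertex of maximum in-degree in a semicomplete digraph is a 2-serf.

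Next I would establish the containment $N^-[v] \subseteq N^-(u)$. Let $z \in N^-[v]$; since $u \in A$ we have $z \neq u$, so semicompleteness yields an arc between $z$ and $u$. If the arc went from $u$ to $z$, then either $z = v$ and $u$ would reach $v$ in one arc, placing $u$ in $N^-(v)$, or $z \in N^-(v)$ and $u \to z \to v$ would be a two-arc path, placing $u$ in $N^{--}(v)$; both options contradict $u \in A$. Hence $(z,u)$ is an arc of $T$ and $z \in N^-(u)$.

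Finally I would verify that $u$ is a 2-serf of $T$ itself by treating an arbitrary $w \in V(T) \setminus \{u\}$ along the partition $V(T) = N^-[v] \cup N^{--}(v) \cup A$. If $w \in N^-[v]$, the previous paragraph supplies a direct arc $(w,u)$. If $w \in A$, then $w$ reaches $u$ in at most two arcs inside $T[A]$, and the same path works in $T$. If $w \in N^{--}(v)$, pick an outneighbor $z$ of $w$ with $z \in N^-(v)$; then $z \in N^-(u)$ by the previous paragraph, and $w \to z \to u$ is a two-arc path in $T$.

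The main obstacle I expect is ensuring that the chosen $u$ is a 2-serf of the whole $T$ and not merely of $T[A]$. Restricting the search to $A$ is precisely what forces the correct arc directions from $N^-[v]$ towards $u$, and the vertices of $N^{--}(v)$, which lie outside $A$, are then rescued by relaying through $N^-(v)$ thanks to the containment established above.
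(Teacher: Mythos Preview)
Your proof is correct and follows exactly the same approach as the paper: take $u$ to be a $2$-serf of the subdigraph induced by $V(T)\setminus(N^-[v]\cup N^{--}(v))$ and check that it works. The paper's own proof is a two-line ``one can observe'' version of this; you have simply written out the observations in full.
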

	\begin{proof}
		Consider the subdigraph induced by $V(T)\setminus (N^-[v]\cup N^{--}(v))$ and let $u$ be a $2$-serf in it. Then, one can observe that $u$ is the required vertex.
	\end{proof}

We state now and prove the aforementioned bound of Ai et al.

	\begin{theorem}\label{thm: one-way split oriented graphs}
	Every sink-free one-way split digraph $D$ has a quasi-kernel of size at most \[\frac 1 2 (|V(D)|+3)-\sqrt{|V(D)|} \, .\]
	\end{theorem}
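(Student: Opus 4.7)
The plan is to construct a quasi-kernel of the form $Q = \{u\} \cup S(u)$, where $u$ is a carefully chosen 2-serf of the semicomplete digraph $T := D[K(D)]$, and $S(u) = \{s \in I(D) : N^+(s) \cap N^-[u] = \emptyset\}$ (with $N^-[u]$ computed in $T$). Since $D$ is sink-free and one-way, $T$ is sink-free semicomplete, and by independence any quasi-kernel of $D$ must contain exactly one vertex of $K(D)$, necessarily a 2-serf of $T$, together with every $s\in I(D)$ whose outneighborhood avoids $N^-[u]$. Writing $m := |K(D)|$, $l := |I(D)|$ and $n := m + l$, one has $|Q| = 1 + |S(u)|$, and the task reduces to finding a 2-serf $u$ with $|S(u)| \leq \frac{n+1}{2} - \sqrt{n}$.

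For each $v \in K(D)$, let $p(v) = |\{s \in I(D) : (s,v) \in A(D)\}|$; sink-freeness gives $\sum_v p(v) \geq l$. My approach combines two complementary estimates on $|S(u)|$. The first uses a 2-serf $u_1$ maximizing $|N^-[u_1]|$: Lemma~\ref{lem:1} implies this maximum is attained at a 2-serf (otherwise the lemma would produce one with a strictly larger closed in-neighborhood), and an averaging via $\sum_v |N^-[v]| = m + |A(T)| \geq m + \binom{m}{2}$ forces $|N^-[u_1]| \geq \frac{m+1}{2}$. Since every $s \in S(u_1)$ has $N^+(s) \subseteq B_{u_1} := K(D) \setminus N^-[u_1]$, a set of size at most $\frac{m-1}{2}$, counting arcs yields $|S(u_1)| \leq \sum_{v \in B_{u_1}} p(v) \leq |B_{u_1}| \alpha$, where $\alpha := \max_v p(v)$. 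The second estimate uses a 2-serf $u_2$ with $v^* \in N^-[u_2]$ for a vertex $v^*$ attaining $\alpha$; such $u_2$ exists by Lemma~\ref{lem:1} (or $u_2 = v^*$ itself if $v^*$ is already a 2-serf), and every $s$ with $v^* \in N^+(s)$ is automatically covered, giving $|S(u_2)| \leq l - \alpha$.

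I would then combine these estimates by balancing in the unknown $\alpha$, using the case split $l \leq \frac{n+1}{2} - \sqrt{n}$ (where the trivial bound $|S(u)| \leq l$ suffices for any 2-serf $u$) versus $l > \frac{n+1}{2} - \sqrt{n}$, equivalently $m < \frac{n-1}{2} + \sqrt{n}$, where the structural estimates must be deployed. The main obstacle I expect is the precise balancing: naively taking the minimum of $|B_{u_1}|\alpha$ and $l - \alpha$ yields only $|Q| \leq 1 + \frac{l(m-1)}{m+1}$, which is insufficient when $m$ and $l$ are both of order $n/2$. Closing the gap by $\sqrt{n}$ likely requires strengthening the first estimate by iteratively applying Lemma~\ref{lem:1} to include several heavy-$p$ vertices in $N^-[u_1]$ simultaneously, so that the bound on $|S(u_1)|$ is improved by an additive $\sqrt{n}$ term via an AM--GM type argument. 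Equivalently, when $\alpha < \sqrt{n}$ one has $l \leq \sum_v p(v) \leq m\alpha < m\sqrt{n}$, and substituting this constraint into the estimate from $u_1$ should produce the desired square-root saving; the delicate point is to handle the complementary regime $\alpha \geq \sqrt{n}$ together with $l > \frac{n+1}{2}$, which is where the extremal examples from the previous section concentrate.
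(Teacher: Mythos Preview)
Your setup is correct: in a sink-free one-way split digraph, every minimal quasi-kernel is of the form $\{u\}\cup S(u)$ for a $2$-serf $u$ of $T=D[K(D)]$, with $S(u)$ as you define it. The difficulty is entirely in choosing $u$, and this is where the proposal falls short.

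You yourself identify the problem: combining the two extremal choices $u_1$ (maximising $|N^-[u_1]|$) and $u_2$ (with $v^*\in N^-[u_2]$) yields only $|Q|\leq 1+\frac{l(m-1)}{m+1}$, whereas the target requires roughly $1+\frac{l(m-1)}{2m}$. The factor~$2$ is not a technicality. Take $m=l=n/2$ and suppose every source has a single out-neighbour, distributed so that each $p(v)\in\{1,2\}$; then $\alpha\leq 2$, your bound $|S(u_1)|\leq\frac{m-1}{2}\alpha$ gives only $\approx n/2$, and $|S(u_2)|\leq l-\alpha$ gives $\approx n/2-2$, both exceeding the target $\frac{n+1}{2}-\sqrt n$ by $\sqrt n$. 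Your proposed patches do not rescue this: the case split on $\alpha\lessgtr\sqrt n$ fails precisely here (small $\alpha$, balanced $m,l$), and ``iteratively applying Lemma~\ref{lem:1} to absorb heavy-$p$ vertices'' is not a workable plan, since the lemma gives no control over \emph{which} vertices land in $N^-[u]$ beyond the closed in-neighbourhood of a single prescribed vertex.

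The idea you are missing is to average over \emph{all} clique vertices rather than pick one or two extremal ones. Assign each $s\in I$ to a single out-neighbour, giving a partition $I=\bigcup_i I_i$ with $I_i\subseteq N^-(y_i)$. For each $y_i$, the associated quasi-kernel (via Lemma~\ref{lem:1} if $y_i$ is not itself a $2$-serf) has size at most $1+\sum_{j:\,y_j\in N^+_T(y_i)}|I_j|$. Taking the weighted average with weights $|I_i|/l$ turns the bound into $1+\frac{1}{l}\sum_{(y_i,y_j)\in A(T)}|I_i||I_j|$, and the tournament structure ensures each unordered pair $\{i,j\}$ contributes exactly once. This bilinear form, subject to $\sum_i|I_i|=l$, is maximised at the balanced partition, yielding $\frac{l(m-1)}{2m}+1$; the $\sqrt n$ saving then follows from AM--GM on $\frac{m}{2}+\frac{n}{2m}$. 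It is this averaging, not an extremal choice of $u$, that buys the crucial factor of~$2$.
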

	\begin{proof}
		Let $I=I(D)$ and $K=K(D)$. We first assume that $D[K]$ is a tournament. At the end of this proof, we will show how to prove the case when $D[K]$ is a semicomplete digraph. 
  
        Because $D$ is sink-free, any vertex in $I$ has at least one outneighbor in $K$. Let $y_1,\ldots,y_{|K|}$ be the vertices in $K$ and consider pairwise disjoint $I_i$'s such that $I_i \subseteq N^-(y_i) \cap I$ for every $i \in [|K|]$ and $I = \bigcup_{i=1}^{|K|} I_i$.

		Now for each $i\in [|K|]$, let $Q_i'=\{y_i\}\cup(\cup_{\{j: y_j\in N^+_{D[K]}(y_i)\}} I_j)\setminus N^-(y_i)$. If $y_i$ is a $2$-serf in $D[K]$, then let $Q_i=Q_i'$. Otherwise by Lemma~\ref{lem:1} we can find a $2$-serf $y_t$ of $D[K]$ such that $N^-_{D[K]}[y_i]\subseteq N^-_{D[K]}(y_t)$ and let $Q_i=Q_t'$. Observe that if $y_i$ is a $2$-serf, then $Q_i'$ is a quasi-kernel in $D$. Hence, each $Q_i$ is a quasi-kernel of $D$. In the first case, we have $|Q_i|=|Q_i'|\leq \sum_{\{j\colon y_j\in N^+_{D[K]}(y_i)\}}|I_j|+1$. In the second case $Q_i=Q_t'$ for some $t$ such that $y_t$ is a $2$-serf of $D[K]$ and $N^-_{D[K]}[y_i]\subseteq N^-_{D[K]}(y_t)$, which implies $N^+_{D[K]}[y_t]\subseteq N^+_{D[K]}(y_i)$, and $|Q_i|= |Q_t'|\leq\sum_{\{j\colon y_j\in N^+_{D[K]}(y_t)\}}|I_j|+1\leq \sum_{\{j\colon y_j\in N^+_{D[K]}(y_i)\}}|I_j|+1$. In both cases, the inequality $|Q_i|\leq \sum_{\{j\colon y_j\in N^+_{D[K]}(y_i)\}}|I_j|+1$ holds. Consider a random quasi-kernel $Q$ where $Q=Q_i$ with probability $\frac{|I_i|}{|I|}$. The expectation of the size of $Q$ is then bounded above as follows, where the second inequality holds because $\sum_{(y_i,y_j)\in A(D[K])}|I_i||I_j|$ is just the number of edges in a complete $|K|$-partite graphs with $|I|$ vertices and this is clearly maximized when we have balanced parts.
		\begin{eqnarray*}
			\mathbb{E}(|Q|)&=&\sum_{i=1}^{|K|}\frac{|Q_i||I_i|}{|I|}\leq \frac{1}{|I|}\left(\sum_{i=1}^{|K|}\left(\sum_{\{j:y_j\in N^+_{D[K]}(y_i)\}}|I_i||I_j|+|I_i|\right)\right)\\
			&=& \frac{1}{|I|}\left(\sum_{(y_i,y_j)\in A(D[K])}|I_i||I_j|+|I|\right)\leq \frac{1}{|I|}\left(\frac{|K|\left(|K|-1\right)}{2}\right)\left(\frac{|I|}{|K|}\right)^{2}+1\\[1.5ex]
			&=&\frac{|I|(|K|-1)}{2|K|}+1 \, .
		\end{eqnarray*}
Therefore, there must exist an $s \in [|K|]$ such that $|Q_s| \leq \frac{|I|(|K|-1)}{2|K|}+1$. Since $|I|+|K|=|V(D)|$, we have
		\begin{equation*}
			|Q_s|\leq \frac{|I|}{2}-\frac{|I|}{2|K|}+1=\frac{|V(D)|}{2}-\left(\frac{|K|}{2}+\frac{|V(D)|}{2|K|}\right)+\frac{1}{2}+1\leq\frac{|V(D)|+3}{2}-\sqrt{|V(D)|} \, ,
		\end{equation*}where the last inequality follows from the inequality between arithmetic and geometric means.

Now we consider the case when $D[K]$ is a semicomplete digraph. Consider a spanning subgraph $D'$ of $D$ where $D'[K]$ is a tournament. 
  If $D'[K]$ is sink-free, then we are done by the above discussion. If $D'[K]$ has a sink, then this sink is a $2$-serf and therefore a required quasi-kernel of $D$ as $1\leq \frac 1 2 \bigl(|V(D)|+3 \bigl) -\sqrt{|V(D)|}$.
	\end{proof}
 
	The following corollary is obtained by applying the above theorem when $|V(D)|\geq 3$, the case $|V(D)|=2$ being obvious.

	\begin{corollary}\label{col:1}
		Every sink-free one-way split digraph $D$ has a quasi-kernel of size at most $\frac 1 2|V(D)|$.
	\end{corollary}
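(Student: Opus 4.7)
The plan is to derive the corollary directly from Theorem~\ref{thm: one-way split oriented graphs} via a short arithmetic check, together with a trivial treatment of the degenerate small cases.

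First I would verify the implication for $n \coloneqq |V(D)| \geq 3$. It suffices to show
\[
\frac{n+3}{2} - \sqrt{n} \;\leq\; \frac{n}{2},
\]
which rearranges to $\sqrt{n} \geq \frac{3}{2}$, i.e., $n \geq \frac{9}{4}$. Hence for every integer $n \geq 3$, the upper bound of Theorem~\ref{thm: one-way split oriented graphs} is already at most $\frac{1}{2}|V(D)|$, and the corollary follows by taking the quasi-kernel produced by that theorem.

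It remains to handle $n \leq 2$. For $n = 1$ there is no sink-free digraph at all (in the loopless setting), so the statement is vacuous. For $n = 2$, using that every vertex of $I(D)$ is a source in a one-way split digraph, one checks that no configuration with a vertex in $I(D)$ can be sink-free: any arc between the two vertices must go from $I(D)$ to $K(D)$, leaving the $K(D)$-vertex with no outneighbor, and the absence of any arc makes both vertices sinks. Therefore the only possibility is $K(D) = V(D)$ with both arcs present, and a single vertex then forms a quasi-kernel of size $1 = \frac{1}{2}|V(D)|$.

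I do not expect any genuine obstacle here: the content of the corollary is entirely in Theorem~\ref{thm: one-way split oriented graphs}, and the only thing to watch out for is confirming that the small-$n$ regime (where the $\sqrt{n}$ term fails to dominate the additive $\frac{3}{2}$) does not produce a troublesome counterexample, which the case analysis above rules out.
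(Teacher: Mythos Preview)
Your proposal is correct and follows exactly the approach indicated in the paper: apply Theorem~\ref{thm: one-way split oriented graphs} together with the arithmetic inequality $\frac{n+3}{2}-\sqrt{n}\leq \frac{n}{2}$ for $n\geq 3$, and treat $n\leq 2$ separately. The paper simply declares the case $|V(D)|=2$ ``obvious'' where you spell out the case analysis, but the content is identical.
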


With the help of this corollary, we prove now that every sink-free digraph admits a quasi-kernel of size at most $2/3$ of the total number of vertices (Theorem~\ref{thm:split}).

	\begin{proof}[Proof of Theorem~\ref{thm:split}]
Let $I=I(D)$ and $K=K(D)$. Let $M$ be a maximal matching from $K$ to $I$, and let $I_M$ (resp.\ $K_M$) be the vertices in $I$ (resp.\ $K$) that are covered by $M$. Now, consider the subdigraph induced by $A=I_M\cup N^-(I_M)\cup (N^{--}(I_M)\cap I)$. Clearly, $I_M$ is a quasi-kernel of $D[A]$ with size at most $\frac 1 2 |V(D)|$ since $|I_M|=|K_M|$ and $K_M\subseteq N^-(I_M)$. Let $B=V(D)\setminus A$. We may assume that $B\neq \varnothing$ for otherwise we are done by the fact that $I_M$ is a desired quasi-kernel of $D$. Let $N^{--}_I(I_M)=N^{--}(I_M)\cap I$. We first show the following claim holds.
		
		\begin{claim}\label{cl:1}
			There is no arc from $B\cap K$ to $I$. 
		\end{claim}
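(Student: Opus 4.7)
The plan is to proceed by contradiction: assume that some arc $(k,v)$ exists with $k \in B \cap K$ and $v \in I$, and split the argument into two cases according to whether $v$ is covered by the matching $M$.

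In the first case, $v \in I_M$. Since $k \in K$ implies $k \notin I_M$, the arc $(k,v)$ directly witnesses $k \in N^-(I_M)$, and so $k \in A$, contradicting the assumption $k \in B$.

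In the second case, $v \in I \setminus I_M$. The key preparatory remark is that $K_M \subseteq N^-(I_M) \subseteq A$, because every vertex of $K_M$ is, by definition, the $K$-endpoint of a matching arc of the form $(k',v') \in M$ with $v' \in I_M$, and is therefore an in-neighbor of a vertex of $I_M$. Because $k \in B$, this forces $k \in K \setminus K_M$, while $v \in I \setminus I_M$ by assumption; hence $M \cup \{(k,v)\}$ would be a strictly larger matching from $K$ to $I$, contradicting the maximality of $M$.

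I do not anticipate any real obstacle here: the claim is essentially an immediate consequence of how the sets $I_M$, $K_M$, and $A$ are defined. The only point to be careful about is the (implicit but crucial) convention that the arcs of $M$ are oriented from $K$ to $I$, which is precisely what makes the inclusion $K_M \subseteq N^-(I_M)$ hold and what legitimizes the step of extending $M$ by the arc $(k,v)$ in the second case.
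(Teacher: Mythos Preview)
Your proof is correct and follows the same two-case split as the paper's own proof: first ruling out $v\in I_M$ because that would force $k\in N^-(I_M)\subseteq A$, and then ruling out $v\in I\setminus I_M$ by extending $M$. In fact you are slightly more careful than the paper, which simply says ``we can add it to $M$'' without explicitly checking that $k\notin K_M$; your observation $K_M\subseteq N^-(I_M)\subseteq A$ makes this step precise.
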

		\begin{proof}
Since $N^-(I_M)$ is in $A$ and $A\cap B=\varnothing$, there is no arc from $B\cap K$ to $I_M$. If there is one arc from $B\cap K$ to $I\setminus I_M$, then we can add it to $M$, which contradicts the fact that $M$ is a maximum matching from $K$ to $I$. 
		\end{proof}
		
We may assume that $|B|\geq 2$. In fact, if $V(B)=\{v\}$, then, since $D$ is sink-free, $v$ must have an outneighbor. Therefore, $v$ is in $B\cap K$ as otherwise $v$ is a sink since $v\notin N^{--}_I(I_M)$, a contradiction. By Claim \ref{cl:1}, all outneighbors of $v$ are in $N^-(I_M)$. Thus, $I_M$ is a desired quasi-kernel of $D$. 

In the following, we will construct two quasi-kernels $Q$ and $Q'$ of $D$, and we will show that one of them has size at most $\frac 2 3 |V(D)|$. 
		
	Since $D$ is sink-free and $N^{--}_I(I_M)\subseteq A$ and $A\cap B=\varnothing$, every vertex $v$ in $B\cap I$ has an outneighbor in $B\cap K$. On the other hand, by Claim \ref{cl:1}, there is no arc from $B\cap K$ to $I$, which implies $D[B]$ is a one-way split digraph with sinks only possibly in $B\cap K$. If $D[B]$ has a sink $v\in B\cap K$ then one can observe that $v$ is a $2$-serf of $D[B]$ and we let $Q_1=\{v\}$. Note that since $|B|\geq 2$, the inequality $|Q_1|\leq \frac 1 2 |B|$ holds. If not, by Corollary~\ref{col:1}, we can find a quasi-kernel $Q_1$ of $D[B]$ with size at most $\frac{|B|}{2}$. Since $|B|+| N^{--}_I(I_M)|+|I_M|+|K_M|\leq |V(D)|$, we have $|Q_1|\leq \frac{|B|}{2}\leq \frac{|V(D)|}{2}-|I_M|-\frac{| N^{--}_I(I_M)|}{2}$. Let $Q=Q_1\cup I_M\cup N^{--}_I(I_M)-N^-(Q_1)$. Note that to see that $Q$ is an independent set, since $N^-(Q_1)$ not in $Q$ one only needs to show that there is no arc from $Q_1\cap K\subseteq B\cap K$ to $I_M\cup  N^{--}_I(I_M)$, which was already shown by Claim \ref{cl:1}. One can observe that $Q$ is a quasi-kernel of $D$ such that
		\begin{eqnarray*}
			|Q|&\leq&|Q_1|+|I_M|+| N^{--}_I(I_M)|\\
			&\leq&  \frac{|V(D)|}{2}-|I_M|-\frac{| N^{--}_I(I_M)|}{2}+|I_M|+| N^{--}_I(I_M)|\\
			&=& \frac{|V(D)|}{2}+\frac{| N^{--}_I(I_M)|}{2} \, .
		\end{eqnarray*}
		
		We now show that $D$ admits a quasi-kernel $Q'$ with size at most $|V(D)|-| N^{--}_I(I_M)|$. If every vertex in $B\cap K$ has an outneighbor in $N^-(I_M)$, then $I_M\cup (B\cap I)$ is clearly a quasi-kernel of $D$ and its size is at most $|V(D)|-| N^{--}_I(I_M)|$. Therefore, we assume that there exists a vertex $v\in B\cap K$ such that $N^-(I_M)\subseteq N^-(v)\setminus N^+(v)$. We may further assume that $v$ is a $2$-serf of $D[K]$. In fact, if $v$ is not, by Lemma~\ref{lem:1}, we can find a $2$-serf $u\in Y$ of $D[K]$ such that $v\in N^-(u)$, which in turn implies that $u\in B\cap K$ as $N^-(I_M)\subseteq N^-(v)\setminus N^+(v)$. Then, we consider $u$ instead of $v$. Again, one can check by Claim~\ref{cl:1} that $\{v\}\cup I\setminus (N^{--}_I(I_M)\cup (N^-(v))$ is a quasi-kernel of $D$ with size at most $|V(D)|-| N^{--}_I(I_M)|$. Thus,

		\begin{eqnarray*}
			\min\{|Q|,|Q'|\}&\leq& \frac{2|Q|}{3}+\frac{|Q'|}{3}\\
			&\leq &\frac{2}{3}\left(  \frac{|V(D)|}{2}+\frac{| N^{--}_I(I_M)|}{2}\right)+\left( \frac{|V(D)|}{3}-\frac{| N^{--}_I(I_M)|}{3}\right)\\
			&=&\frac 2 3 |V(D)|\, ,
		\end{eqnarray*}
		which completes the proof. 
	\end{proof}

\subsection{Sinks or no sinks: equivalence of conjectures}

As noted in the introduction, Conjecture~\ref{conj:sqkc} is even open for ratios larger than $1/2$. For $\alpha\geq 1/2$, we state thus

    \begin{conj1alpha}\label{conjecture:sink-free}
    	Every sink-free digraph $D$ has a quasi-kernel of size at most $\alpha |V(D)|$.
    \end{conj1alpha}

Conjecture~\hyperref[conjecture:sink-free]{1$(\frac 1 2)$} is exactly Conjecture~\ref{conj:sqkc}.

Denote by $S(D)$ the set of sinks of a digraph $D$. Kostochka, Luo, and Shan~\cite{kostochka2022towards} proposed a conjecture which is the special case of the following one when $\alpha = 1/2$. They proved that, even though Conjecture~\hyperref[conjecture:not sink-free]{2$(\frac 1 2)$} looks more general, it is equivalent to Conjecture~\ref{conj:sqkc}.

    \begin{conj2alpha}\label{conjecture:not sink-free}
    	Every digraph $D$ has a quasi-kernel of size at most 
     \[
     \alpha\bigl(|V(D)|+|S(D)|-|N^-(S(D))|\bigl)\, .
     \]
    \end{conj2alpha}

In fact, Conjectures~\hyperref[conjecture:sink-free]{1$(\alpha)$} and~\hyperref[conjecture:not sink-free]{2$(\alpha)$} are equivalent for every $\alpha \geq 1/2$, even if we restrict them to any hereditary class, as we show now.

We say a class of digraph $\mathcal{F}$ is a {\em hereditary class} if the induced subdigraphs of any digraph $D\in \mathcal{F}$ is also in $\mathcal{F}$. For any digraph class $\mathcal{F}$, we use $\mathcal{F}^{\,\text{s-free}}$ to denote the collection of sink-free digraphs in $\mathcal{F}$. 
    \begin{theorem}\label{thm:convert conj1 to conj2}
    	Let $\mathcal{F}$ be a hereditary class. Then, for each $\alpha\geq 1/2$, Conjecture~\hyperref[conjecture:sink-free]{\textup{1}$(\alpha)$} is true for $\mathcal{F}^{\,\textup{\text{s-free}}}$ if and only if Conjecture~\hyperref[conjecture:not sink-free]{\textup{2}$(\alpha)$} is true for $\mathcal{F}$.
    \end{theorem}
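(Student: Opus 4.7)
The backward direction is immediate: if $D$ is sink-free then $|S(D)|=|N^-(S(D))|=0$, so the bound in Conjecture~\hyperref[conjecture:not sink-free]{2$(\alpha)$} collapses to $\alpha|V(D)|$, which is exactly Conjecture~\hyperref[conjecture:sink-free]{1$(\alpha)$}.

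For the forward direction, I would proceed by strong induction on $|V(D)|$, exploiting the hereditary property of $\mathcal{F}$ at every reduction. The base case $|V(D)|=0$ is vacuous. In the inductive step, if $D$ is sink-free, Conjecture~\hyperref[conjecture:sink-free]{1$(\alpha)$} applies directly and yields the claim (since the target bound reduces to $\alpha|V(D)|$). Otherwise set $S:=S(D)$ (nonempty) and $N:=N^-(S)$, and consider the induced subdigraph $D':=D-S-N$; by hereditary it lies in $\mathcal{F}$ and has strictly fewer vertices than $D$. The inductive hypothesis provides a quasi-kernel $Q'$ of $D'$ with $|Q'|\leq\alpha(|V(D')|+|S(D')|-|N^-_{D'}(S(D'))|)$. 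Setting $Q:=S\cup Q'$ produces a quasi-kernel of $D$: independence holds because $S$ consists of sinks (no outgoing arcs) and no vertex of $Q'\subseteq V(D')$ has an outneighbor in $S$ (such a vertex would lie in $N$, contradicting $Q'\subseteq V(D')$); domination follows from $N$ being at distance~$1$ from $S\subseteq Q$, and from $V(D')\setminus Q'$ being covered at distance at most~$2$ by $Q'$ inside $D'$, hence inside $D$.

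The crux is the size bound. Using $|V(D)|=|V(D')|+|S|+|N|$, the inequality $|Q|\leq\alpha(|V(D)|+|S|-|N|)$ reduces to
\[
\alpha\bigl(|S(D')|-|N^-_{D'}(S(D'))|\bigr)\leq (2\alpha-1)|S|.
\]
I expect this to be the main obstacle. The ``new'' sinks of $D'$ are the vertices of $V(D')$ whose $D$-outneighbors all lie in $N$, and in general many of them can share a common outneighbor in $N$, so that $|S(D')|-|N^-_{D'}(S(D'))|$ is not controlled by $|S|$ in the naive recursion. Overcoming this requires refining the recursion target: one natural replacement is to apply the induction not to $D':=D-S-N$ but to the induced subdigraph on $W:=V(D)\setminus(S\cup N\cup N^{--}(S))$ (the vertices at $D$-distance more than~$2$ from $S$), supplemented by adjoining to $Q$ a small, independently chosen collection of vertices from $N^{--}(S)$ needed to dominate the residual hard-to-reach vertices. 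An equivalent route is iterative peeling: repeatedly strip away ``fake sinks'' already dominated by $S$ at distance~$2$ in $D$, and apply Conjecture~\hyperref[conjecture:sink-free]{1$(\alpha)$} to the resulting sink-free core. In either approach the hypothesis $\alpha\geq 1/2$ is essential: the nonnegative slack $(2\alpha-1)|S|$ absorbs the residual deficit introduced by fake sinks at each reduction step, and the sharp endpoint $\alpha=1/2$ is exactly the argument of Kostochka, Luo and Shan~\cite{kostochka2022towards}, whose matching/counting trick governs the extremal case.
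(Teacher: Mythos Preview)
Your backward direction is fine, and so is the construction $Q=S\cup Q'$ from $D':=D-S-N$ together with the verification that $Q$ is a quasi-kernel. You also correctly reduce the size bound to
\[
\alpha\bigl(|S(D')|-|N^-_{D'}(S(D'))|\bigr)\leq(2\alpha-1)\,|S|,
\]
and you are right that this is the crux. But here the proposal stops: you do not prove this inequality, and the two fixes you sketch (passing to $W=V(D)\setminus(S\cup N\cup N^{--}(S))$, or unspecified ``iterative peeling'') are not carried out and are not obviously sufficient---the first still leaves the same uncontrolled sink/in-neighborhood deficit one level down. So as written there is a genuine gap at exactly the nontrivial step.

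The paper's resolution is a short case split on the sign of $|S(D')|-|N^-_{D'}(S(D'))|$. If it is nonpositive, the displayed inequality is immediate (here $\alpha\geq 1/2$ is used only to make the right side nonnegative). If it is positive, one peels \emph{once more, but differently}: set $D_2:=D'-S(D')$ (do \emph{not} remove $N^-_{D'}(S(D'))$) and take $Q:=S(D)\cup Q_2$ for a quasi-kernel $Q_2$ of $D_2$ given by induction. Domination of the discarded layer $S(D')$ comes for free, since every vertex of $S(D')$ has all its $D$-outneighbors in $N=N^-(S(D))$ and hence reaches $S(D)$ in two steps. The key counting fact is $S(D_2)\subseteq N^-_{D'}(S(D'))$, so $|S(D_2)|<|S(D')|$; combined with $|V(D_2)|=|V(D')|-|S(D')|$ this gives $|Q_2|\leq\alpha|V(D')|$, after which the same arithmetic as in the easy case finishes. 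No matching trick from~\cite{kostochka2022towards} is invoked; the entire argument is this one extra peel.
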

     \begin{proof}
The sufficiency is clear. We show the necessity. Let $D$ be a digraph in $\mathcal{F}\setminus \mathcal{F}^{\,\text{s-free}}$. We show that Conjecture~\hyperref[conjecture:not sink-free]{2$(\alpha)$} is true for $D$ by induction on $|V(D)|$. When $|V(D)|=1$ the existence of a desired quasi-kernel is clear.

	  
We now assume that $|V(D)|\geq 2$ and Conjecture~\hyperref[conjecture:not sink-free]{2$(\alpha)$} is true for all digraphs in $\mathcal{F}\setminus \mathcal{F}^{\,\text{s-free}}$ with order smaller than $|V(D)|$. Consider the digraph $D_1 = D-S(D)-N^-(S(D))$, and let $S_1 \coloneqq S(D_1)$. If $S_1=\varnothing$, since Conjecture~\hyperref[conjecture:sink-free]{1$(\alpha)$} is true for $D_1$, it has a quasi-kernel $Q_1$ with size at most $|Q_1|\leq \alpha |V(D)|$. Observe that $Q_1\cup S(D)$ is a quasi-kernel of $D$, and its size is at most 
	  \begin{eqnarray*}
	  	|Q_1|+|S(D)|&\leq& \alpha|V(D_1)|+|S(D)|\\
	  	&=& \alpha\bigl(|V(D)|-|S(D)|-|N^-(S(D))|\bigl)+|S(D)|\\
	  	&\leq& \alpha\bigl(|V(D)|+|S(D)|-|N^-(S(D))|\bigl)\, ,
	  \end{eqnarray*}
	  where the last inequality holds since $1-\alpha\leq \alpha$ (as $\alpha \geq 1/2$).

Now we assume $S_1 \neq \varnothing$. If $|S_1|\leq |N^-_{D_1}(S_1)|$, by the induction hypothesis, $D_1$ has a quasi-kernel $Q_2$ of size at most $\alpha\bigl(|V(D_1)|+|S_1|-|N^-_{D_1}(S_1)|\bigl)\leq \alpha |V(D_1)|$. Then, $Q_2\cup S(D)$ is a quasi-kernel of $D$ and its size is again at most $\alpha|V(D_1)|+|S(D)|\leq \alpha\bigl(|V(D)|+|S(D)|-|N^-(S(D))|\bigl)$. Thus, we may further assume that $|S_1|>|N^-_{D_1}(S_1)|$. Consider $D_2=D_1-S_1$ and $S_2 \coloneqq S(D_2)$. If $S_2=\varnothing$, then $D_2\in \mathcal{F}^{\,\text{s-free}}$. Let $Q'$ be the quasi-kernel of $D_2$ with size at most $\alpha |V(D_2)|$ then $Q'\cup S(D)$ is a quasi-kernel of $D$ of size at most $\alpha|V(D_2)|+|S(D)|\leq \alpha\bigl (|V(D)|+|S(D)|-|N^-(S(D))|\bigl)$ and we are done. Assume now that $S_2\neq \varnothing$. By the induction hypothesis, we have a quasi-kernel $Q'$ of $D_2$ with size at most $\alpha(|V(D_2)|+|S_2|-|N_{D_2}^-(S_2)|)$. Note that $S_2\subseteq N^-_{D_1}(S_1)$ and therefore $|S_2|\leq |N^-_{D_1}(S_1)|<|S_1|$. Consider the quasi-kernel $Q'\cup S(D)$ of $D$. Its size is at most
	 \begin{eqnarray*}
	 	|Q'|+|S(D)|&\leq&\alpha\bigl(|V(D_2)|+|S_2|-|N_{D_2}^-(S_2)|\bigl)+|S(D)|\\
	 	&=&\alpha\bigl(|V(D)|-|S(D)|-|N^-(S(D))|-|S_1|+|S_2|-|N_{D_2}^-(S_2)|\bigl)+|S(D)|\\
	 	&<&\alpha\bigl(|V(D)|-|S(D)|-|N^-(S(D))|\bigl)+|S(D)|\\
	 	&\leq&\alpha\bigl(|V(D)|+|S(D)|-|N^-(S(D))|\bigl)\, ,
	 \end{eqnarray*}
	 where the second last inequality holds because $|S_1|>|S_2|$. This completes the proof.
     \end{proof}
     
If $\mathcal{F}$ is the class of acyclic digraphs (this class is clearly hereditary), then $\mathcal{F}^{\,\text{s-free}}=\varnothing$ since acyclic digraphs must have sinks. Thus, Conjecture~\hyperref[conjecture:sink-free]{1$(\alpha)$} is trivially true for $\mathcal{F}^{\,\text{s-free}}$. Then, by Theorem~\ref{thm:convert conj1 to conj2}, the following corollary holds.
    \begin{corollary}
    	Every sink-free acyclic digraph $D$ has a quasi-kernel of size at most 
     
     \[
     \frac 1 2 \bigl(|V(D)|+|S(D)|-|N^-(S(D))|\bigl) \, .
     \]
     
    \end{corollary}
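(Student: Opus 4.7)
The plan is to derive this corollary as a direct application of Theorem~\ref{thm:convert conj1 to conj2}, taking $\mathcal{F}$ to be the class of acyclic digraphs and $\alpha = 1/2$. First, I would verify that $\mathcal{F}$ is hereditary: removing vertices from an acyclic digraph cannot create a directed cycle, so every induced subdigraph of an acyclic digraph is itself acyclic.

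Next, I would use the elementary fact that every nonempty acyclic digraph has a sink — the terminal vertex of any maximal directed path must have no outneighbor, for otherwise either the path could be extended or a cycle would appear, contradicting acyclicity. Consequently $\mathcal{F}^{\,\text{s-free}} = \varnothing$, and Conjecture~\hyperref[conjecture:sink-free]{1($1/2$)} holds vacuously on $\mathcal{F}^{\,\text{s-free}}$. The hypothesis of Theorem~\ref{thm:convert conj1 to conj2} is therefore satisfied, and the theorem yields Conjecture~\hyperref[conjecture:not sink-free]{2($1/2$)} for every digraph in $\mathcal{F}$, which is exactly the claimed bound
\[
|Q| \leq \frac{1}{2}\bigl(|V(D)| + |S(D)| - |N^-(S(D))|\bigl).
\]

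There is no substantive obstacle: the entire argument has already been absorbed into Theorem~\ref{thm:convert conj1 to conj2}, and the corollary is merely the specialization to $\mathcal{F} = \{\text{acyclic digraphs}\}$ with $\alpha = 1/2$. The only thing worth flagging is that, under a literal reading, the qualifier "sink-free" in the statement is vacuous for acyclic $D$ (as just noted); the intended content is that the bound holds for every acyclic digraph.
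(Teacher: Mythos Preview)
Your proposal is correct and follows exactly the paper's approach: apply Theorem~\ref{thm:convert conj1 to conj2} with $\mathcal{F}$ the (hereditary) class of acyclic digraphs and $\alpha=1/2$, noting that $\mathcal{F}^{\,\text{s-free}}=\varnothing$ so Conjecture~1($1/2$) holds vacuously there. Your remark that the qualifier ``sink-free'' in the statement is vacuous (and that the intended content is the bound for every acyclic digraph) is also apt.
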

Let $H$ be any digraph and $\mathcal{F}$ be the class of $H$-free digraphs. Clearly, this is a hereditary class. Recently, Ai et al.~\cite{ai2023results} proved that Conjecture~\hyperref[conjecture:sink-free]{1$(\alpha)$} is true for sink-free anti-claw-free digraphs. Thus, by Theorem~\ref{thm:convert conj1 to conj2}, Conjecture~\hyperref[conjecture:not sink-free]{2$(\alpha)$} holds for claw-free digraphs, i.e., the following corollary is true.
    
    \begin{corollary}
    Every anti-claw-free digraph $D$ has a quasi-kernel of size at most
    
     \[
     \frac 1 2 \bigl(|V(D)|+|S(D)|-|N^-(S(D))|\bigl) \, .
     \]
     
    \end{corollary}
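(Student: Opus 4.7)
The plan is to obtain this corollary as a direct application of Theorem~\ref{thm:convert conj1 to conj2} at $\alpha = 1/2$, using the result of Ai et al.~\cite{ai2023results} as input, exactly parallel to how the preceding corollary about acyclic digraphs is derived.

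First, I would verify that the class $\mathcal{F}$ of anti-claw-free digraphs is hereditary. Since ``anti-claw-free'' is defined by forbidding the anti-claw as an induced subdigraph, the class is trivially closed under taking induced subdigraphs, so $\mathcal{F}$ satisfies the hypothesis of Theorem~\ref{thm:convert conj1 to conj2}. This is the only input-specific check that the plan requires.

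Next, by the theorem of Ai et al.\ cited just above the statement, Conjecture~\hyperref[conjecture:sink-free]{1$(\alpha)$} holds for $\mathcal{F}^{\,\text{s-free}}$ at $\alpha = 1/2$: every sink-free anti-claw-free digraph admits a quasi-kernel of size at most $|V(D)|/2$. Feeding this into Theorem~\ref{thm:convert conj1 to conj2} with $\alpha = 1/2$ upgrades the bound to Conjecture~\hyperref[conjecture:not sink-free]{2$(\alpha)$} for the entire class $\mathcal{F}$, which is precisely the statement of the corollary.

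There is essentially no obstacle: the conceptual work has been packaged into Theorem~\ref{thm:convert conj1 to conj2}, and the required sink-free ingredient is supplied by~\cite{ai2023results}. The only item to double-check is the heredity of the class, which as noted is immediate from the forbidden-subdigraph definition.
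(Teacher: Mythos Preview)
Your proposal is correct and follows essentially the same approach as the paper: verify that the class of anti-claw-free digraphs is hereditary (as an $H$-free class), invoke the result of Ai et al.\ for the sink-free case, and apply Theorem~\ref{thm:convert conj1 to conj2} with $\alpha = 1/2$.
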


Similarly, because of Theorem \ref{thm:split}, we have:

    \begin{corollary}
     Every split digraph $D$ has a quasi-kernel of size at most
     \[
     \frac 2 3 \bigl(|V(D)|+|S(D)|-|N^-(S(D))|\bigl) \, .
     \]
    \end{corollary}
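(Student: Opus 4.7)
The plan is to derive this corollary as a direct consequence of Theorem~\ref{thm:split} combined with the conjecture-conversion result Theorem~\ref{thm:convert conj1 to conj2}. First I would check that the class $\mathcal{F}$ of split digraphs is hereditary: if $D$ is a split digraph with vertex partition $V(D) = K(D) \cup I(D)$ into a clique and an independent set, then for any $S \subseteq V(D)$ the induced subdigraph $D[S]$ inherits the partition $(K(D)\cap S, I(D)\cap S)$, so $D[S]$ is again a split digraph. Hence $\mathcal{F}$ is hereditary and Theorem~\ref{thm:convert conj1 to conj2} is applicable to it.

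Next I would note that Theorem~\ref{thm:split} asserts exactly that Conjecture~\hyperref[conjecture:sink-free]{1$(\alpha)$} holds on $\mathcal{F}^{\,\text{s-free}}$ for the value $\alpha = 2/3$. Since $2/3 \geq 1/2$, the hypothesis of Theorem~\ref{thm:convert conj1 to conj2} is satisfied with this $\alpha$. Applying that theorem therefore yields that Conjecture~\hyperref[conjecture:not sink-free]{2$(\alpha)$} holds on $\mathcal{F}$ for $\alpha = 2/3$; this is exactly the statement of the corollary, namely that every split digraph $D$ admits a quasi-kernel of size at most $\tfrac{2}{3}\bigl(|V(D)|+|S(D)|-|N^-(S(D))|\bigl)$.

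There is essentially no obstacle here: the proof is a one-line invocation of the two preceding results, and the only thing to verify is the hereditary property of split digraphs, which is immediate from the definition. I would therefore write the proof as two short sentences, one observing heredity and one citing Theorems~\ref{thm:split} and~\ref{thm:convert conj1 to conj2}.
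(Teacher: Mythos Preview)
Your proposal is correct and follows exactly the approach the paper intends: the paper simply writes ``Similarly, because of Theorem~\ref{thm:split}, we have'' and states the corollary, so the argument is precisely the one-line application of Theorem~\ref{thm:convert conj1 to conj2} (with $\alpha=2/3$) to the hereditary class of split digraphs that you describe.
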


\section{Complete split graphs}\label{sec:complete}

In this section, we prove Theorem~\ref{thm:split-comp}, which states in particular that, in a sink-free biorientation of a complete split graph, the minimal size of a quasi-kernel is at most two. We need a preliminary lemma.

\begin{lemma} \label{lemma_complete_split}
Let $D$ be a biorientation of a complete split graph with no quasi-kernel of size one.  Let $x$ be a vertex with the maximum number of inneighbors in the clique-part.  Denote by $L$ the set of vertices with the same inneighborhood as $x$ (including $x$).
Then, 
\begin{itemize}
\item the set $L$ is included in the independent-part.
\item every vertex $y$ in $L$ forms a quasi-kernel of $D[(V(D)\setminus L)\cup \{y\} ]$.
\end{itemize} 
\end{lemma}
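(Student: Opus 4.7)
The plan is to combine two key facts about $D$. First, since $D$ is a biorientation of a complete split graph, any two adjacent vertices of the underlying graph---two vertices of $K(D)$, or one vertex of $K(D)$ together with any vertex of $I(D)$---are joined by at least one arc of $D$. Second, the ``no quasi-kernel of size one'' assumption means that for every vertex $z$ there is a ``witness'' $w \neq z$ with $w \notin N^-(z)$ and $N^+(w) \cap N^-(z) = \varnothing$. I will use these two facts to drive a ``maximality vs.\ no 2-serf'' argument.

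The technical heart of the plan is the observation that \emph{no vertex in $K(D)$ can maximize $|N^-(\cdot) \cap K(D)|$}; this immediately implies the first bullet. The proof I have in mind is: if $z \in K(D)$ were such a maximizer, the no-2-serf hypothesis would supply a witness $w$, and the adjacency-arc property would imply that $w$ is joined to every vertex of $(N^-(z) \cap K(D)) \cup \{z\}$ by at least one arc. The disjointness $N^+(w) \cap N^-(z) = \varnothing$, together with $(w,z) \notin A$, rules out every arc \emph{out of} $w$ towards those vertices, giving $(N^-(z) \cap K(D)) \cup \{z\} \subseteq N^-(w) \cap K(D)$. Since $z \notin N^-(z)$, this yields $|N^-(w) \cap K(D)| \geq |N^-(z) \cap K(D)| + 1$, contradicting maximality. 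Applied to $x$ itself this forces $x \in I(D)$; applied to any $y \in L$ (which satisfies $N^-(y) \cap K(D) = N^-(x) \cap K(D)$ and is therefore also a maximizer) it forces $y \in I(D)$, so $L \subseteq I(D)$.

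For the second bullet, I fix $y \in L$ and set $D' = D[(V(D) \setminus L) \cup \{y\}]$. Since $x \in I(D)$ we have $N^-(y) = N^-(x) \subseteq K(D)$, and $K(D) \cap L = \varnothing$, so every inneighbor of $y$ in $D$ survives in $D'$. Hence $\{y\}$ is a quasi-kernel of $D'$ iff every $v \in V(D) \setminus L$ lies in $N^-(x) \cup N^{--}(x)$. If some $v$ were to violate this, then $v \notin N^-(x)$ and $N^+(v) \cap N^-(x) = \varnothing$; the same adjacency argument yields $N^-(x) \subseteq N^-(v) \cap K(D)$, and the maximality of $x$ forces equality $N^-(v) \cap K(D) = N^-(x)$. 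The sub-case $v \in K(D)$ is ruled out by the key observation above, while the sub-case $v \in I(D)$ forces $N^-(v) \subseteq K(D)$ and thus $N^-(v) = N^-(x)$, placing $v$ in $L$---again a contradiction. The main obstacle will be stating the key observation uniformly, so that clique-to-clique and clique-to-independent adjacencies are handled in the same stroke; once this is in place, the second bullet reduces to a short case split on which part of the split graph the offending vertex $v$ lies in.
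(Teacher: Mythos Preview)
Your proof is correct and follows essentially the same approach as the paper's. Both arguments hinge on the same key observation---that no vertex of $K(D)$ can maximize $|N^-(\cdot)\cap K(D)|$---proved by the same contrapositive maximality argument (the paper phrases it as ``such a $v$ would be a $2$-serf''; you phrase it as ``a witness $w$ would have strictly more clique-inneighbors''), and both handle the second bullet by the same case split on whether the offending vertex lies in $K(D)$ or $I(D)$, reusing the key observation for the clique case.
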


\begin{proof}
Suppose,  aiming for a contradiction, that there is a vertex $v$ in $K(D)$ with the maximum number of inneighbors in $K(D)$.  Then $\{v\}$ is a quasi-kernel because every vertex in $N^+(v)$ has an outneighbor in $N^-(v)$, by the maximality of $v$; a contradiction with $D$ having no quasi-kernel of size one.  This proves the first item.

Consider a vertex $y$ in $L$ and a vertex $u$ not in $L$.  Suppose first that $u$ is in $K(D)$.  We have just seen that $u$ has fewer inneighbors in $K(D)$ than $y$. Thus, $u$ has an outneighbor in $N^-(y)\cup\{y\}$.  Suppose now that $u$ is in $I(D)$. Since $u$ is not in $L$, it has an outneighbor in $N^-(y)$.  In any case, there is a path of length at most two from $u$ to $y$. This proves the second item.
\end{proof}

We can now prove Theorem~\ref{thm:split-comp}.

\begin{proof}[Proof of Theorem~\ref{thm:split-comp}]
Observe that in a biorientation of a complete split graph, if a vertex is a sink, then there is a path of length two from every other non-sink vertex to this sink. Thus,  if $D$ has at least one sink, then there is a unique inclusionwise minimal quasi-kernel, which is formed by all sinks. Assume from now on that $D$ has no sink and no quasi-kernel of size one. We are going to show that $D$ has a quasi-kernel of size two.

Let $x$ be a vertex maximizing $|N^-(x)\cap K(D)|$. We know from Lemma~\ref{lemma_complete_split} that $x$ is in $I(D)$. Suppose now, aiming for a contradiction, that every vertex $v$ in $I(D)$ is such that $N^+(x)\subseteq N^+(v)$. Choose any vertex $y$ in $N^+(x)$. The singleton $\{y\}$ is no quasi-kernel of $D[K(D)]$, since otherwise it would be a quasi-kernel of $D$ of size one. The proof of Chv\'atal and Lov\'asz for the existence of a quasi-kernel~\cite{chvatal_every_1974-1} makes clear that every vertex is in a quasi-kernel or has an outneighbor in a quasi-kernel. Thus, there exists a vertex $z$ in $N^+(y)\cap K(D)$ that forms a quasi-kernel of $D[K(D)]$. The singleton $\{z\}$ is then a quasi-kernel of $D$ as well since every vertex of $I(D)$ has $y$ as outneighbor; a contradiction.

Hence, there is a vertex $t$ in $I(D)$ with $N^+(x)\cap N^-(t)\neq\varnothing$. We claim that $\{x,t\}$ is a quasi-kernel of $D$. It is an independent set. 
Let $L$ be the set of vertices having the same inneighborhood as $x$.
Consider a vertex $v$ in $V(D)\setminus\{x,t\}$.

If $v$ is in $L$, then by definition of $t$ there is a directed path of length two from $v$ to $t$. If $v$ is in $V(D)\setminus L$,  Lemma~\ref{lemma_complete_split} ensures that there is a directed path of length at most two from $v$ to $x$.
\end{proof}

\section{Computational hardness}\label{sec:complexity}

We first show that one cannot confine the seemingly inevitable combinatorial explosion of 
computational difficulty to the size of the sought quasi-kernel.

\begin{proposition}
  {\sc Quasi-Kernel} is $\W[2]$-complete when the parameter is the size of the sought 
  quasi-kernel even for biorientations of split graphs. 
\end{proposition}

\begin{proof}
Membership in \W[2] is clear.
Given a digraph $D$ and an integer $q$,
{\sc Directed Dominating Set} is the problem of deciding if there exists 
a dominating set of size $q$, i.e., a subset $L \subseteq V(D)$ of size $q$ such that every vertex $v \in V(D)$ is either in $L$ 
or has an outneighbor in $L$.
{\sc Directed Dominating Set} is $\W[2]$-complete for parameter $q$~\cite{downey2013fundamentals}.

We reduce {\sc Directed Dominating Set} to {\sc Quasi-Kernel}.
Let $D$ be a digraph and $q$ be a positive integer.
Let $n \coloneqq |V(D)|$, $m \coloneqq |A(D)|$, and $b \coloneqq 2q + 3$. Consider moreover an arbitrary total order $\preceq$ on $A(D)$. Define the following split digraph $D'$:

\begin{alignat*}{3}
 V(D') &\coloneqq && \left\{s\right\} \cup S^1 \cup S^2 \cup K^1 \cup K^2  \\
 A(D') & \coloneqq && ~A_s \cup A_{S^1} \cup A_{S^2} \cup A_{K^1} \cup A_{K^2} \\
 \intertext{where} 
 S^1 &=&& \left\{s_v^1 \colon v\in V(D)\right\}  \\ 
 S^2 &=&& \left\{s^2_i \colon 1 \leq i \leq b\right\}  \\ 
 K^1 &=&& \left\{k^1_a \colon a \in A(D)\right\}  \\
 K^2 &=&& \left\{k^2_i \colon 1 \leq i \leq b\right\}  \\
 \intertext{and}
 A_s &=&& 
   \left\{\left(s, k^1_a\right) \colon a \in A(D) \right\} \cup
   \left\{\left(k^2_i, s\right) \colon 1 \leq i \leq b \right\} \\
 A_{S^1} &=&& 
   \left\{\left(s_v^1, k^1_{(v, v')}\right) \colon (v, v') \in A(D)\right\} \cup
   \left\{\left(k^1_{(v, v')}, s_{v'}^1\right) \colon (v, v') \in A(D)\right\} \\
 A_{S^2} &=&& 
   \left\{\left(s^2_i, k^2_i\right) \colon 1 \leq i \leq b \right\} \\
 A_{K^1} &=&& 
    \left\{\left(k^1_a, k^1_{a'}\right) \colon a,a' \in A(D), a \prec a'\right\} \cup
    \left\{(k^1_a,k^2_{\ell}): a\in A(D), 1\leq \ell\leq b \right\}\\
 A_{K^2} &=&& 
   \left\{(k^2_i, k^2_j) \colon 1 \leq i < j \leq b,\,  i = j\, \text{mod}\,  2\right\} \cup 
   \left\{(k^2_j, k^2_i) \colon 1 \leq i < j \leq b, i \neq j\, \text{mod}\,  2\right\}\text{.}
\end{alignat*}
Clearly, $D'$ is an orientation of a split graph 
(i.e., $\{s\} \cup S^1 \cup S^2$ is an independent set and 
$K^1 \cup K^2$ induces a tournament),
$|V(D')| = n + m + 2b + 1$, and 
$|A(D')| = \binom{m+b}{2} + 3m + 2b$. 

We claim that there exists a dominating set of size at most $q$ in $D$ if and only if $D'$ has
a quasi-kernel of size at most $q+1$.

Suppose first that there exists a dominating set $L\subseteq V(D)$ of size at most $q$ in $D$.
Define $Q = \{s\} \cup \left\{s_v^1 \colon v \in L \right\}$.
We note that $Q \subseteq \{s\} \cup S^1$, and hence 
$Q$ is an independent set.
Furthermore, by construction, the vertex $s$ is at distance at most two from
every vertex in $S^2 \cup K^1 \cup K^2$.
Since $L$ is a dominating set,
it is now clear that $Q$ is a quasi-kernel of $D'$ of size at most $q+1$.

Conversely,
suppose that there exists a quasi-kernel $Q \subseteq V(D')$ of size at most $q+1$ in $D'$.
By independence of $Q$, we have
$\left| Q \cap \left(K^1 \cup K^2\right)\right| \leq 1$.
We first claim that $s \in Q$.
Indeed, suppose, aiming at a contradiction, that $s \notin Q$.
Let $X = S^2 \setminus Q$.
By construction,
$N^{+}(X) = \left\{k^2_i \in K^2 \colon s^2_i \in X\right\}$.
On the one hand, we have
$|X| > \left|S^2\right| - |Q| \geq b - (q+1) = q+2$ (note that $S^2$ cannot contain the whole set $Q$),
and hence
$\left|N^{+}(X)\right| > q+2$.
On the other hand, $|X|$ being positive,
there exists $k^2_j \in K^2 \cap Q$ such that 
$N^+(X) \subseteq N^{-}[k^2_j] \cap K^2$. But, according to the definition of $A_{K^2}$, we have $|N^{-}[k^2_j] \cap K^2| \leq \left\lceil b/2 \right\rceil \leq q+2$
for all $k^2_i \in K^2$ and in particular for $k_j^2$.
This is a contradiction and hence $s \in Q$.
We now observe that $k^1_a \in N^{+}(s)$ for every $k^1_a \in K^1$ and 
$s \in N^{+}(k^2_i)$ for every $k^2_i \in K^2$.
Combining this observation with $s \in Q$ and the independence of $Q$,
we obtain $Q \cap \left(K^1 \cup K^2\right) = \varnothing$. We have thus $\left| S^1 \cap Q \right| \leq q$.
We now turn to $S^1$.
It is clear that $s$ is at distance three from every vertex $s_v^1 \in S_1$.
Therefore, by definition of quasi-kernels,
for every vertex $s_v^1 \in S^1 \setminus Q$,
there exists one vertex $s_{v'}^1 \in S^1 \cap Q$ such that
$(s_v^1, k^1_{(v,v')}) \in A(D')$ and $(k^1_{(v,v')}, s^1_{v'}) \in A(D')$.
Note that, by construction,
$(s^1_v, k^1_{(v,v')})$ and $(k^1_{(v,v')}, s^1_{v'})$ are two arcs of $D'$
if and only 
$(v, v')$ is an arc of $D$.
Then it follows that $L = \left\{v \colon s^1_v \in Q \right\}$ is 
a dominating set in $D$ of size at most $q$.
\end{proof}

On the other hand, we have the following ``positive'' result.

\begin{proposition}
  \textsc{Quasi-Kernel} for biorientations of split graphs is 
  \FPT\xspace for parameter $|K(D)|$ or parameter $|I(D)|$.
\end{proposition}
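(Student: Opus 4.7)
The plan is to treat the two parameters separately. A common observation is that any independent set in a split digraph intersects the clique-part in at most one vertex, so $|Q\cap K(D)|\leq 1$ for every quasi-kernel $Q$. For the parameter $|I(D)|$ this alone yields an \FPT{} algorithm by brute force: enumerate the $2^{|I(D)|}$ subsets $S\subseteq I(D)$ as candidates for $Q\cap I(D)$, and for each such $S$ combine it with one of the at most $|K(D)|+1$ admissible choices for $Q\cap K(D)$, namely $\varnothing$ or a single vertex of $K(D)$ non-adjacent in the underlying graph to every element of $S$; then verify independence and reachability in at most two steps from every other vertex in polynomial time, and keep the smallest feasible candidate. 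This gives an $O^*\bigl(2^{|I(D)|}\bigr)$ algorithm.

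For the parameter $k\coloneqq |K(D)|$, the idea is to enumerate a small amount of ``clique-side information'' and reduce to a {\sc Set Cover} sub-problem. First, I would enumerate the $k+1$ possibilities for $Q\cap K(D)$; denote the choice by $v$ (possibly $\varnothing$). Next, I would enumerate every subset $K^\star\subseteq K(D)\setminus\{v\}$ with the intended meaning that $K^\star$ will be exactly the set of vertices in $K(D)\setminus\{v\}$ that have an outneighbor in $Q\cap I(D)$; correspondingly, the complement $\bar{K^\star}\coloneqq (K(D)\setminus\{v\})\setminus K^\star$ must have no outneighbor in $Q\cap I(D)$. Setting
\[
I'\coloneqq I(D)\setminus\Bigl(N^-(v)\cup N^+(v)\cup\bigcup_{w\in\bar{K^\star}} N^+(w)\Bigr)
\]
and imposing $Q\cap I(D)\subseteq I'$ then simultaneously enforces independence of $Q$ and the ``none'' half of the specification of $K^\star$.

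The crucial observation is that, once $v$ and $K^\star$ are fixed, whether a vertex outside $Q$ reaches $Q$ in at most two steps depends only on $(v,K^\star)$ and the arcs of $D$, not on the specific choice of $Q\cap I(D)$. Indeed, the outneighbors of a vertex in $I(D)$ lie in $K(D)$, so a vertex $u\in I(D)\setminus Q$ reaches $Q$ in at most two steps if and only if $(u,v)\in A(D)$, or $u$ has an outneighbor $w\in K(D)$ with $(w,v)\in A(D)$, or $u$ has an outneighbor in $K^\star$; an analogous analysis handles $u\in K(D)\setminus\{v\}$. This identifies in polynomial time which vertices of $I(D)$ must belong to $Q\cap I(D)$, and detects infeasibility of the pair $(v,K^\star)$ (for instance, a forced vertex lying outside $I'$, a vertex of $K^\star$ whose outneighborhood avoids $I'$, or a vertex of $K(D)\setminus\{v\}$ that cannot reach $Q$).

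All that remains is to add as few vertices of $I'$ as possible so that every $w\in K^\star$ still lacking an outneighbor in $Q\cap I(D)$ acquires one. This is a {\sc Set Cover} instance whose universe is contained in $K^\star$, hence of size at most $k$, and is solvable in $O^*(2^k)$ by the standard subset dynamic programming. Summing over the $(k+1)\cdot 2^{k}$ pairs $(v,K^\star)$ gives total running time $O^*(4^k)$, so the problem is \FPT{} in $k$. The main obstacle I anticipate is the bookkeeping of the reachability condition, in particular verifying that the intended interpretation of $K^\star$ remains consistent with the chosen $I'$ and with the set-cover solution; once that is set up cleanly, correctness of the algorithm is a routine verification.
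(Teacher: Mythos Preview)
Your treatment of the parameter $|I(D)|$ is essentially identical to the paper's: both enumerate all subsets of $I(D)$ together with at most one vertex of $K(D)$ and test each candidate in polynomial time.

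For the parameter $k=|K(D)|$ your approach is correct but genuinely different from the paper's, and in fact asymptotically much faster. The paper groups the vertices of $I(D)$ into equivalence classes according to their in- and out-neighborhoods in $K(D)$; there are at most $4^{k}$ such classes, and for each class one guesses whether it is disjoint from $Q$, contained in $Q$, or contributes a single representative, yielding a running time of order $3^{4^{k}}$ up to polynomial factors. Your algorithm instead guesses the single clique vertex $v$ and the set $K^\star\subseteq K(D)\setminus\{v\}$ of clique vertices having an outneighbor in $Q\cap I(D)$; once $(v,K^\star)$ is fixed, the two-step reachability condition for every vertex outside $Q$ is determined, the forced members of $Q\cap I(D)$ are identified, and the residual problem of realising $K^\star$ with as few additional vertices of $I'$ as possible is a \textsc{Set Cover} instance over a universe of size at most $k$. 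This gives $O^*(4^{k})$, a single exponential rather than a double exponential. The trade-off is that the paper's argument is a two-line observation requiring no auxiliary algorithm, whereas yours needs the (standard) $O^*(2^{k})$ subset dynamic programming for \textsc{Set Cover} and some consistency bookkeeping; the payoff is the exponentially smaller dependence on $k$. Your anticipated obstacle---checking that the intended meaning of $K^\star$ is preserved by the chosen $I'$ and by the set-cover solution---is handled correctly: the definition of $I'$ guarantees that no vertex of $\bar{K^\star}$ acquires an outneighbor in $Q\cap I(D)$, and the set-cover step guarantees that every vertex of $K^\star$ does.
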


\begin{proof}
  Let $D$ be a biorientation of a split graph, 
  and write $n = |V(D)|$.
  By independence of quasi-kernels,  
  we have $|Q \cap K(D)| \leq 1$ for every quasi-kernel $Q$ of $D$.
  This straightforward observation is the first step of the two algorithms.

  \emph{Algorithm for parameter $|K(D)|$}.
  Define the equivalence relation $\sim$ on $I(D)$ as follows:
  $s \sim s'$ if and only if $N^{-}(s) = N^{-}(s')$ and $N^{+}(s) = N^{+}(s')$.
  For every $I' \in I(D)/\sim$, select arbitrarily one vertex of $I'$
  and denote it by $v(I')$.
  Let $\mathcal{Q}(D)$ be the set of all quasi-kernels of $D$ of size at most $k$.
  The key point is to observe that if $\mathcal{Q}(D) \neq \varnothing$, then 
  there exists $Q \in \mathcal{Q}(D)$ such that,
  for every equivalence class $I' \in I(D)/\sim$, 
  either
  \begin{enumerate*}[label=(\roman*)]
      \item\label{ivide} $I' \cap Q = \varnothing$, or
      \item\label{iinclus} $I' \subseteq Q$, or
      \item\label{isingl}  $I \cap Q = \{v(I')\}$.
  \end{enumerate*}
  The algorithm is now as follows.
  Consider all possible states of the equivalence classes 
  (cases~\ref{ivide},~\ref{iinclus}, and~\ref{isingl} described just above) together 
  with any (including none) vertex of $K(D)$. 
  The algorithm returns true if some combination involves at most $k$ vertices 
  and is a quasi-kernel of $D$.
  The size of $I(D)/\sim$ is bounded by $4^{|K(D)|}$ since each equivalence class is determined by its out and inneighborhood.
  Therefore, the algorithm is 
  $O(n^2 \, |K(D)| \, 3^{|I(D)/\sim|}) = O(n^{2} \, |K(D)| \, 3^{(4^{|K(D)|})})$ time.
  
  \emph{Algorithm for parameter $|I(D)|$}.
  Select (including none) a vertex of $K(D)$.
  For every subset $I' \subseteq I(D)$ of size at most $k-1$ 
  (or $k$, if no vertex of $K(D)$ is selected), 
  check if $I'$ together with the selected
  vertex of $K(D)$ is a quasi-kernel of $D$. 
  The algorithm is $O(n^{2} \, |K(D)| \, \sum_{i=0}^{k} \binom{|I(D)|}{i}) = O(n^2 \, |K(D)| \, 2^{|I(D)|})$ time.
\end{proof}

\bibliographystyle{amsplain}
\bibliography{quasi-kernel}

\end{document}